\newtheorem{theorem}{Theorem}
\newtheorem{definition}[theorem]{Definition}
\newtheorem{proposition}[theorem]{Proposition}
\newtheorem{lemma}[theorem]{Lemma}
\newtheorem*{lemma*}{Lemma}
\numberwithin{equation}{section}
\newcommand{\RR}{\mathbb{R}}
\newcommand{\R}{\mathbb{R}}
\newcommand{\N}{\mathbb{N}}
\newcommand{\C}{\mathbb{C}}
\newcommand{\resc}{{\bf resc}}
\newcommand{\mF}{{\mathcal F}}
\newcommand{\mur}{\mu_\resc}
\newcommand{\mY}{{\mathcal Y}}
\newcommand{\vt}{\vartheta_{k,N}}
\title[Uniqueness for stationary Keller-Segel models]{Uniqueness of stationary states for singular Keller-Segel type models}
\author{Vincent Calvez}
\address{Unit\'e de Math\'ematiques Pures et Appliqu\'ees, CNRS UMR 5669 and \'equipe-projet INRIA NUMED, \'Ecole Normale Sup\'erieure de Lyon, Lyon, France}
\email{vincent.calvez@ens-lyon.fr}
\author{Jos\'e Antonio Carrillo}
\address{Mathematical Institute, University of Oxford, Oxford OX2 6GG, UK.} 
\email{carrillo@maths.ox.ac.uk}
\author{Franca Hoffmann}
\address{Department of Computing and Mathematical Sciences, California Institute of Technology, 1200 E California Blvd. MC 305-16, Pasadena, CA 91125, USA.}
\email{fkoh@caltech.edu}
\begin{document}

\maketitle

\begin{abstract}
We consider a generalised Keller-Segel model with non-linear porous medium type diffusion and non-local attractive power law interaction, focusing on potentials that are more singular than Newtonian interaction. We show uniqueness of stationary states (if they exist) in any dimension both in the diffusion-dominated regime and in the fair-competition regime when attraction and repulsion are in balance. As stationary states are radially symmetric decreasing, the question of uniqueness reduces to the radial setting. Our key result is a sharp generalised Hardy-Littlewood-Sobolev type functional inequality in the radial setting.
\end{abstract}


\medskip
\noindent
{\footnotesize
\textbf{Keywords:} uniqueness, Hardy-Littlewood-Sobolev inequality, aggregation-diffusion, Keller-Segel model.\\
\textbf{AMS Subject Classification:} 35B38, 35B40, 26D10
}

     \section{Introduction}\label{sec:intro}

We consider a family of partial differential equations modelling self-attracting
diffusive particles at the macroscopic scale,
\begin{equation}
\left\{
\begin{array}{l}
\partial_t \rho  =  \Delta\rho ^m +
 \nabla\cdot \left( \rho \nabla S\right)\,
, \quad t>0\, , \quad x\in \RR^N\, , \smallskip\\
\displaystyle \rho(t=0,x) = \rho_0(x)\,,
\end{array}
\right. \label{eq:KS}
\end{equation}
where the diffusion exponent $m>1$ is of porous medium type \cite{PME}. 
Since equation \eqref{eq:KS} is positivity preserving, conserves mass, and is invariant by translation, we impose
$$
\rho_0(x)\geq 0\, ,\qquad
\int_{\RR^N}\rho_0(x)\, dx = M\, ,
\qquad \int_{\RR^N}x\rho_0(x)\, dx = 0
$$
for some fixed mass $M>0$, and it follows that the same holds true for the solution $\rho(t,x)$.
The mean-field potential $S(x) := W(x)*\rho(x)$ depends non-locally on the solution $\rho(t,x)$ through convolution with the interaction potential $W(x)$. Depending on the context and the application, different choices of repulsive or attractive potentials are used to model pair-wise interactions between particles, see for instance \cite{BCLR,CaVa2015,CCHCetraro,CCY19} and the references therein. Here, we focus on attractive singular power-law potentials $W(x)=W_k(x)$, 
\begin{equation*}
    W_k(x):=\frac{|x|^k}{k}\,,
    \qquad k<0\,.
\end{equation*}
For $W_k\in L_{loc}^1(\R^N)$, we require $k>-N$. Whilst for $k>1-N$, the gradient $\nabla S_k:= \nabla \left(W_k \ast \rho\right)$ is well defined, it becomes a singular integral in the range $-N<k\leq 1-N$, and we thus define it via a Cauchy principal value. Hence, the mean-field potential gradient in equation \eqref{eq:KS} is given by
\begin{equation}\label{gradS}
 \nabla S_k(t,x) :=
 \begin{cases}
  \nabla W_k(x) \ast \rho(t,x)\, ,
  &\text{if} \, \, k>1-N\, , \\[2mm]
  \displaystyle\int_{\RR^N} \nabla W_k (x-y)\left(\rho(t,y)-\rho(t,x)\right)\, dy\, ,
  &\text{if} \, \, -N<k\leq 1-N\, . 
 \end{cases}
\end{equation}
Writing $k=2s-N$ with $s\in \left(0,\frac{N}{2}\right)$, the convolution term $S_k$ is governed by a fractional diffusion process,
$$
c_{N,s}(-\Delta)^s S_k =  \rho\, , \qquad c_{N,s}=(2s-N) \frac{\Gamma\left(\frac{N}{2}-s\right)}{\pi^{N/2}4^s\Gamma(s)}
=\frac{k\Gamma\left(-k/2\right)}{\pi^{N/2}2^{k+N}\Gamma\left(\frac{k+N}{2}\right)}\, ,
$$
and so the system~\eqref{eq:KS} can be interpreted as
\begin{equation*}
\left\{
\begin{array}{l}
\partial_t \rho  =  \Delta\rho ^m +
 \nabla\cdot \left( \rho \nabla \left(-\Delta\right)^{-s}\rho\right)\,
, \quad t>0\, , \quad x\in \RR^N\, , \smallskip\\
\displaystyle \rho(t=0,x) = \rho_0(x)\,.
\end{array}
\right. 
\end{equation*}
Models with this type of non-local interaction have been considered in \cite{CaVa11a,CaVa11b} in the repulsive case.
Since the non-linear diffusion acts as a repulsive force between particles, one expects competing effects between the diffusion term and the non-local attractive forces, which motivates the study of equilibria of the system. For certain choices of parameters $m$ and $k$, diffusion may overcome attraction, and no stationary states for \eqref{eq:KS} exist. In this case, we seek self-similar profiles instead as they are the natural candidates characterizing the long-time behaviour of the system. Self-similar profiles of equation~\eqref{eq:KS} are stationary states of a suitably rescaled aggregation-diffusion equation with an additional confining potential. Combining both the original and rescaled system, we write
\begin{equation}
\partial_t \rho =  \Delta\rho^m +\nabla\cdot \left( \rho \nabla S_k \right)+\mur \nabla\cdot \left(x \rho\right)\,
, \quad t>0\, , \quad x\in \RR^N\, , 
\label{eq:KSall}
\end{equation}
 with $\mur =0$ for original variables, and $\mur=1$ for rescaled variables. For details on the change of variables transforming \eqref{eq:KS} into \eqref{eq:KSall}, see \cite{CCH1}. 

The competing effects of attractive and repulsive forces can also be observed on the level of the energy functional corresponding to equation \eqref{eq:KSall}:
\begin{equation*}\label{eq:functional}
\mF[\rho]=\frac{1}{m-1}\int \rho^m dx + \frac12 \iint W_k(x-y)\rho(x)\rho(y)\, dxdy
+\frac{\mur}{2}\int |x|^2\rho(x)\,dx\,.
\end{equation*}
More precisely, we can write equation \eqref{eq:KSall} as
$$
\partial_t\rho = \nabla \cdot\left(\rho\nabla \frac{\delta \mF}{\delta \rho}\right)\,
$$
where the first variation of $\mF$ is given by
\begin{equation*}
     \frac{\delta \mF}{\delta\rho}[\rho](x)
    =\frac{m}{m-1} \rho^{m-1} + W_k\ast \rho +\mur \frac{|x|^2}{2} \,,
\end{equation*}
and so solutions to \eqref{eq:KSall} are gradient flows in the 2-Wasserstein metric with respect to the energy $\mF$, see \cite{AGS,Villani03}. One simple way to observe the competition between the diffusion and aggregation term in original variables $\mur=0$ is to consider mass-preserving dilations
$$
\rho_\lambda(x):=\lambda^N\rho(\lambda x)\,.
$$
Substituting $\rho_\lambda$ into $\mF$ with $\mur=0$, we see that the two contributions to the energy are homogeneous with different powers,
\begin{equation*}
\mF[\rho_\lambda]=\frac{\lambda^{N(m-1)}}{m-1}\int \rho^m dx+ \frac{\lambda^{-k}}{2} \iint W_k(x-y)\rho(x)\rho(y)\, dxdy\,,
\end{equation*}
and one observes different types of behaviour depending on the relation between the parameters $N$, $m$ and $k$. The energy functional is homogeneous if attraction and repulsion are in balance, so that the two terms of the energy scale with the same power, that is, if $m=m_c$ for
$$
m_c:= 1-\frac{k}{N}\,.
$$
This motivates the definition
of three different regimes: the \emph{diffusion-dominated regime} $m>m_c$, the \emph{fair-competition regime} $m=m_c$, and the \emph{attraction-dominated regime} $0<m<m_c$.
We will here concentrate on the diffusion-dominated and fair-competition regimes, $m\ge m_c$ in the more singular range $-N<k\le 2-N$. For a detailed overview of the different regimes and recent results, see \cite{CCH1}.

Uniqueness of stationary states is not an immediate consequence of the
gradient flow structure, as the energy functional lacks appropriate
convexity properties (in the sense of McCann's displacement convexity).
Nevertheless, we take advantage of the recent results in \cite{CHVY,CHMV} proving
that any stationary solution in the present setting (see below for
details) are radially symmetric decreasing. Our contribution is to
establish uniqueness of the radial stationary state, using its
reformulation as a critical point of the energy functional (as
expected). Indeed, we prove that any radial critical point of the energy
functional is a global minimiser (as if the functional would be convex),
and we control the equality cases. This amounts to estimate precisely
the balance between the convex part (non-linear diffusion) and the
non-convex part (non-local attraction) in order to show that convexity
is strong enough to discard any other critical point than the global
minimum. Our methodology strongly relies on radial symmetry, so
that \cite{CHVY,CHMV} is a prerequisite to our result.


\subsection{Literature Review}
Let us start by summarising the properties of system~\eqref{eq:KSall} that are known in the literature. 

In the case of the fair-competition regime $m=m_c$ in original variables $\mur=0$, a similar critical mass phenomenon occurs as for the classical Keller-Segel model \cite{BlaDoPe06,BCC12,CaCa11} with logarithmic interaction and linear diffusion. More precisely, it was shown in \cite{BCL} that there exists a critical mass $M_c$ in the case of Newtonian interaction $k=2-N$ for which infinitely many stationary states exist. For sub-critical masses $0<M<M_c$, no stationary states exist as diffusion overcomes attraction, but solutions exist globally in time and decay in a self-similar fashion. For super-critical masses $M>M_c$, attraction overcomes diffusion, and solutions cease to exist in finite time. As shown in \cite{CCH1,CCHCetraro}, this dichotomy holds in fact in the full range $-N<k<0$ in the fair competition regime $m=m_c$.

In the case of the fair-competition regime $m=m_c$ in rescaled variables $\mur=1$ and for subcritical masses $0<M<M_c$ , we have existence of a stationary solution $\bar \rho_M$ with mass $\int \bar \rho_M =M$ by \cite[Theorem 2.9]{CCH1}, which corresponds to a self-similar profile for equation~\eqref{eq:KSall} in original variables with $\mur=0$. Uniqueness of this stationary state is known in one-dimension, as well as convergence in 2-Wasserstein distance of solutions under certain assumptions on the transport map between the solution and the stationary state, see \cite{CCHCetraro}. In higher dimensions, uniqueness and convergence results were shown in \cite{Y14} for the special case of the Newtonian interaction kernel $k=2-N$.

As soon as $m>m_c$, we expect regularising effects from the dominating diffusive term. For the diffusion-dominated regime $m>m_c$ in original variables $\mur=0$, uniform $L^\infty$-bounds were obtained in \cite{Sug2,CaCa06} for any initial mass $M>0$ in the case of Newtonian interactions $k=2-N$. Recently, further results on the existence, boundedness and regularity of solutions have been obtained in \cite{YPZhang18}. Moreover, the existence of global minimisers for the energy functional $\mF$ for any mass $M>0$ was shown in \cite{CCV,CS16} for Newtonian interactions $k=2-N$ and for more general interaction kernels $-N<k<0$ in \cite{CHMV}. Minimisers of the energy functional $\mF$ are stationary states of equation~\eqref{eq:KSall} thanks to the gradient flow structure, as long as they are regular enough. As a direct consequence, we obtain existence of stationary states in the above cases. 
Finally, we point out that existence and uniqueness of stationary solutions have also been obtained for $m=2$  under suitable assumptions in the case of integrable attractive interaction potentials in \cite{Kaib}. The uniqueness of the stationary state for $m>m_c$ was shown in one dimension in \cite{CHMV} using optimal transport techniques, and in the Newtonian case $k=2-N$ in any dimension $N\geq 3$ in \cite{KY12} by a dynamical argument. The most general case for general $k$ however has not been answered yet up to now.

For general attractive potentials, the authors in \cite{delgadino2019uniqueness} recently showed that the uniqueness/non-uniqueness criteria are determined by the power of the degenerate diffusion, with the critical power being $m=2$. In the case $m\geq 2$, they show that for any attractive potential the steady state is unique for a fixed mass. In the case $1<m<2$, they constructed examples of smooth attractive potentials, such that there are infinitely many radially decreasing steady states of the same mass. Here, we concentrate on the specific case of homogeneous potentials $W=W_k$. 


\subsection{Main Results}

Our goal here is to extend the results on the uniqueness of stationary states of system \eqref{eq:KSall} to more singular $k$, higher dimensions $N$ and any $m\ge m_c$ by building on the techniques employed in \cite{CCHCetraro}. We will show that in the case of homogeneous potentials stationary states are indeed unique for all $m\ge m_c$ even if $m<2$ in contrast to \cite{delgadino2019uniqueness}.

We begin by making precise our notion of stationary states.
\begin{definition}\label{def:sstates}
Given $\bar\rho\in L^1_+(\R^N)\cap L^\infty(\R^N)$ we call it a \emph{stationary state} for the evolution problem \eqref{eq:KSall} if $\bar\rho^m\in H^1_{loc}(\R^N)$, $\nabla \bar S_k\in L^1_{loc}(\R^N)$ is as in \eqref{gradS} for $\bar S_k:=W_k\ast\bar\rho$, and it satisfies 
\begin{equation*}
    \nabla\bar\rho^m=-\bar\rho\nabla \bar S_k-\mur x\bar\rho\,.
\end{equation*}
\end{definition}

An important point to make is that due to the results in \cite{CHVY,CHMV}, any stationary solution in the sense of Definition~\ref{def:sstates} in all the cases for $m$, $W$ and $\mur$ discussed in the previous paragraphs are radially symmetric decreasing about their centre of mass and compactly supported. 
This means that the question of uniqueness for stationary states is reduced to the radial setting.
To this end, we rewrite  \eqref{eq:KSall} in radial variables, 
\begin{align}\label{eq:KSrescradial}
 &\partial_t \left(r^{N-1}\rho\right) 
 = 
 \partial_r\left(r^{N-1}\partial_r \rho^m\right) 
+ \partial_r\left(r^{N-1}\rho \partial_r \left(W_k \ast \rho(r)\right)\right)
+ \mur\partial_r\left(r^N \rho\right)\, ,
\end{align}
and work completely in the radial setting from now on. 
Let
$$
 \mY_M := \left\{ \rho \in L_+^1\left(\RR^N\right) \cap L^m \left(\RR^N\right) : ||\rho||_1=M\, , \, \int x\rho(x)\, dx=0\,,\, \mur \int |x|^2\rho(x)\,dx<\infty\right\}.
$$
and its radial subset
$$
\mY_M^*=:\left\{\rho \in \mY_M \, : \, \rho^*=\rho\right\}\, ,
$$
where $\rho^*$ denotes the symmetric decreasing rearrangement of $\rho$.

\begin{theorem}[Sharp Functional Inequality]\label{thm:main1}
 Let $N\geq 2$, $k\in (-N,2-N]$ and $m\ge m_c$. If \eqref{eq:KSrescradial} admits a radial stationary density $\bar \rho$ in $\mY_M^*$, then 
 $$
 \mF[\rho]\geq \mF[\bar \rho]\, , \qquad \forall \, \rho \in \mY_M^*\, ,
 $$
 with the equality cases given by $\bar \rho$, and by its dilations if $m=m_c$ and $\mur=0$.
\end{theorem}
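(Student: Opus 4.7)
My plan is to reduce the stated inequality to a one-dimensional convexity statement by parametrising radial densities via their pseudo-inverse mass distributions, and then to combine the Euler--Lagrange condition for $\bar\rho$ with a sharp integral estimate that absorbs the non-convex interaction contribution into the convex diffusion part. For any $\rho\in\mY_M^*$, define $\mathcal M_\rho(r):=\int_{B_r}\rho(x)\,dx$ and its pseudo-inverse $\Phi_\rho:[0,M]\to[0,\infty)$ by $\mathcal M_\rho(\Phi_\rho(\eta))=\eta$, so that $\rho(\Phi_\rho(\eta))=(|\Sphere^{N-1}|\Phi_\rho^{N-1}\Phi_\rho')^{-1}$. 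In these coordinates each term of $\mF[\rho]$ rewrites explicitly: the porous medium entropy, up to a multiplicative constant, as $\int_0^M(\Phi_\rho^{N-1}\Phi_\rho')^{1-m}\,d\eta$, the confinement as $\tfrac{\mur}{2}\int_0^M\Phi_\rho(\eta)^2\,d\eta$, and the interaction as $\tfrac12\iint_{[0,M]^2}\kappa_N(\Phi_\rho(\eta_1),\Phi_\rho(\eta_2))\,d\eta_1 d\eta_2$, where $\kappa_N(r_1,r_2)$ is the double spherical average of $W_k(r_1\omega_1-r_2\omega_2)$ over $(\omega_1,\omega_2)\in\Sphere^{N-1}\times\Sphere^{N-1}$.

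The main step is then to set $\Phi_\tau:=(1-\tau)\Phi_{\bar\rho}+\tau\Phi_\rho$ and to study $g(\tau):=\mF[\rho_{\Phi_\tau}]$ on $[0,1]$. Since linear interpolation of pseudo-inverses realises the radial $2$-Wasserstein geodesic from $\bar\rho$ to $\rho$, the first-order identity $g'(0)=0$ is precisely the stationarity condition in Definition~\ref{def:sstates} rewritten in the $\Phi$ variable. The diffusion piece is a manifestly convex function of $\tau$, since $(a,b)\mapsto(a^{N-1}b)^{1-m}$ is jointly convex on $(0,\infty)^2$ for every $m>1$, and the confinement is trivially convex in $\Phi$. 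The interaction is the only non-convex ingredient, and the heart of the proof is to show that its negative second variation is dominated, pointwise in $\tau$, by the positive contribution of the diffusion exactly when $m\ge m_c$; this makes $g$ convex on $[0,1]$ and yields $\mF[\rho]-\mF[\bar\rho]=g(1)-g(0)\ge g'(0)=0$.

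The main obstacle is proving this comparison, which is the content of the sharp generalised Hardy--Littlewood--Sobolev inequality announced in the abstract. After computing the Hessian of $\kappa_N$ for the power-law $W_k$ and handling, via the Cauchy principal value prescription in \eqref{gradS}, the diagonal singularity in the subrange $-N<k\le 1-N$, the required estimate reduces to a one-dimensional bilinear inequality controlling $\iint_{[0,M]^2}D^2\kappa_N(\Phi_{\bar\rho}(\eta_1),\Phi_{\bar\rho}(\eta_2))[\psi(\eta_1),\psi(\eta_2)]\,d\eta_1d\eta_2$, with $\psi:=\Phi_\rho-\Phi_{\bar\rho}$, by the weighted quadratic form in $\psi$ and $\psi'$ against the weight $(\Phi_{\bar\rho}^{N-1}\Phi_{\bar\rho}')^{-1-m}$ produced by the second variation of the entropy. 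Matching homogeneities on the two sides forces the threshold to be exactly $m=m_c=1-k/N$, and the stationary Euler--Lagrange equation must be used to calibrate the sharp constant.

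Once this domination is established, convexity of $g$ together with $g'(0)=0$ immediately yield $\mF[\rho]\ge\mF[\bar\rho]$. Equality forces $g$ to be affine, and the strict convexity of the diffusion term away from the scaling direction then pins down $\Phi_\rho=\Phi_{\bar\rho}$, that is $\rho=\bar\rho$, except in the scale-invariant case $m=m_c$, $\mur=0$ where, as noted in the discussion preceding the definition of $m_c$, $\mF$ is left invariant by the mass-preserving dilations $\rho\mapsto\lambda^N\rho(\lambda\,\cdot)$; the corresponding one-parameter orbit of $\bar\rho$ then lies in the equality set, matching the statement of the theorem.
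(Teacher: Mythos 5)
Your outline reduces everything to the assertion that $g(\tau)=\mF[\rho_{\Phi_\tau}]$ is convex on $[0,1]$, i.e.\ that along every radial Wasserstein geodesic emanating from $\bar\rho$ the negative second variation of the interaction is dominated by the second variation of the entropy whenever $m\ge m_c$. This is precisely the point the paper cannot and does not use: as stressed in the introduction, $\mF$ lacks displacement convexity in this singular attractive regime, and the proof never differentiates the energy twice along the interpolation. Your proposal leaves this domination unproved (it is only described, with a homogeneity-matching heuristic for the threshold $m_c$), and as formulated it is internally inconsistent: to conclude $g(1)\ge g(0)$ from $g'(0)=0$ you need $g''(\tau)\ge 0$ for all $\tau\in[0,1]$, yet the bilinear estimate you write down is evaluated at $\Phi_{\bar\rho}$, i.e.\ at $\tau=0$, and your proposed calibration via the Euler--Lagrange equation is only available at the stationary state, not at the intermediate densities $\rho_\tau$, which are not critical points. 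A telling symptom is that the restriction $k\le 2-N$, which is essential to the theorem and is exactly where the paper's key convexity estimate (Lemma~\ref{lem:comparison from below}, see Figure~\ref{fig:convexity}) breaks down for $k>2-N$, plays no role anywhere in your argument; whatever inequality you would need in the ``heart of the proof'' must see this threshold, and nothing in the outline does.

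For comparison, the paper avoids any second-order argument: it works at zeroth order, expressing $\rho$ as the push-forward of $\bar\rho$ by a radial monotone map, bounding the interaction term from below via Jensen's inequality \eqref{eq:weightedJensen} together with the relative convexity of the hypergeometric profile $\vt(s)/k$ with respect to $(1-s^N)^{k/N}$ (Lemma~\ref{lem:comparison from below}), then converting that lower bound into diffusion and confinement terms through the integral characterisation of stationary states (Lemma~\ref{lem:charsstatessubNewtonian} with the test function $g=\varphi^{k/N}$), and finally using the scalar inequalities $\frac{z^{1-m}}{m-1}-\frac{z^{1-m_c}}{m_c-1}\ge\frac{1}{m-1}-\frac{1}{m_c-1}$ and $\frac{z^{2/N}}{2}-\frac{z^{k/N}}{k}\ge\frac12-\frac1k$ to remove the transport map. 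Your pseudo-inverse parametrisation and the use of the stationarity of $\bar\rho$ are in the same spirit as the paper's push-forward formulation, but the proposal as it stands is missing the actual analytic core and rests on a convexity-along-geodesics claim that is neither established nor, in this regime, expected to hold.
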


From the above, we can deduce the following uniqueness result.
\begin{theorem}[Uniqueness]\label{thm:main2}
 Let $N\geq 3$ and $k\in(-N,2-N]$. 
 \begin{enumerate}
  \item[(i)] If $m>m_c$ and $\mur=0$, then there is at most one stationary state of \eqref{eq:KSall} for any mass $M>0$ and any centre of mass. Moreover, it coincides with the global minimiser for $\mF$ in $\mY_M^*$ for any $M>0$ (up to translations), as long as $m_c<m<m_*$, where
  \begin{equation*}
  m^*:=
  \begin{cases}
  \frac{2-k-N}{1-k-N}\, , \qquad &\text{if} \quad  -N<k<1-N\, , \\
  + \, \infty &\text{if} \quad 1-N\leq k \le 2-N\, .
  \end{cases}
 \end{equation*}
     \item[(ii)] If $m=m_c$ and $\mur=1$, then there exists at most one stationary state to \eqref{eq:KSall} for any $0<M<M_c$ and with zero centre of mass. Moreover, it coincides with the global minimiser for $\mF$ in $\mY_M^*$.
     \item[(iii)] If $m=m_c$ and $\mur=0$, then there exists at most one stationary state (up to dilations and translations) to \eqref{eq:KSall} for the critical mass $M=M_c$. Moreover, it coincides with the global minimiser for $\mF$ in $\mY_{M_c}^*$.
 \end{enumerate}
\end{theorem}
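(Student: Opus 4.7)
The plan is to combine Theorem~\ref{thm:main1} with the radial symmetry results of \cite{CHVY,CHMV} and deduce uniqueness from the characterization of equality cases in the sharp functional inequality. Under the assumptions on $m$, $k$ and $\mur$ in each of the three cases, \cite{CHVY,CHMV} guarantees that every stationary state in the sense of Definition~\ref{def:sstates} is radially symmetric decreasing about its centre of mass and compactly supported. In cases (i) and (iii) the system has $\mur=0$ and is translation invariant, so after translating we may place the centre of mass at the origin; in case (ii) the confinement with $\mur=1$ together with the assumption of zero centre of mass already pins the centre at the origin. Consequently, every stationary state under consideration can be assumed to lie in $\mY_M^*$.

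Now suppose $\bar\rho_1,\bar\rho_2\in\mY_M^*$ are two stationary states of mass $M$. Applying Theorem~\ref{thm:main1} with $\bar\rho=\bar\rho_1$ yields $\mF[\bar\rho_2]\geq\mF[\bar\rho_1]$, and applying it with $\bar\rho=\bar\rho_2$ yields the reverse inequality; hence $\mF[\bar\rho_1]=\mF[\bar\rho_2]$ and $\bar\rho_2$ realises equality for $\bar\rho_1$. By the equality characterisation in Theorem~\ref{thm:main1}, in cases (i) and (ii) this forces $\bar\rho_2=\bar\rho_1$, while in case (iii), where $m=m_c$ and $\mur=0$, $\bar\rho_2$ must be a mass-preserving dilation of $\bar\rho_1$. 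Combined with the reduction above, this yields the claimed uniqueness in each case modulo the symmetries allowed by the statement (translations in all three, dilations additionally in (iii)).

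For the identification with the global minimiser of $\mF$ in $\mY_M^*$, I would invoke existence of minimisers from the literature cited in the introduction: \cite{CHMV} provides a radial minimiser in the diffusion-dominated range $m_c<m<m^*$ of case (i), and \cite{CCH1} does so for the fair-competition cases (ii) and (iii). Such a minimiser satisfies the Euler-Lagrange equation and, given the regularity guaranteed in these parameter ranges, qualifies as a stationary state in the sense of Definition~\ref{def:sstates}; the uniqueness just established then identifies it with the unique stationary state. The main obstacle has already been absorbed into Theorem~\ref{thm:main1}: once the sharp inequality and its equality cases are in hand, the present proof reduces to a short comparison argument, and the only residual technicality is the existence and regularity of global minimisers, which is precisely what constrains the admissible range to $m<m^*$ in case (i).
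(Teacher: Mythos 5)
Your proposal is correct and follows essentially the same route as the paper: reduce to the radial class $\mY_M^*$ via the symmetry results of \cite{CHVY,CHMV}, apply Theorem~\ref{thm:main1} with each stationary state playing the role of $\bar\rho$ to force equality of energies, and conclude from the equality cases (with dilations only surviving when $m=m_c$, $\mur=0$), identifying the result with the global minimiser whose existence is taken from \cite{CCH1,CHMV}. The paper's own proof is just a condensed version of this comparison argument, so no further comment is needed.
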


\subsection{Strategy of Proof}
Our main contribution is Theorem~\ref{thm:main2}. This result however is an immediate consequence of Theorem~\ref{thm:main1}, and the main challenge lies in deriving the functional inequality in Theorem~\ref{thm:main1}. Then Theorem~\ref{thm:main2} follows by noting that all radially symmetric stationary states are in fact global minimisers of $\mF$. The existence of the global minimiser in the above ranges has been proven in \cite{CCH1,CHMV}.

Let us comment in a bit more detail on the strategy of proof for the functional inequality in Theorem~\ref{thm:main1}, and the broader principles at play.
\begin{itemize}
\item To obtain the functional inequality, we need to show a lower bound on the energy $\mF[\rho]$ for generic radial functions $\rho\in\mY_M^*$. This lower bound is related to the PDE \eqref{eq:KS} via its equilibrium states as the lower bound is given precisely by $\mF[\bar\rho]$ where $\bar\rho$ is a stationary state of \eqref{eq:KS}. Our first result is to rewrite the energy functional $\mF[\rho]$ for radial functions $\rho$ in terms of Gauss hypergeometric functions.
\item The two main ingredients for the proof of Theorem~\ref{thm:main1} are  (i) a natural characterisation of radial stationary states for the PDE \eqref{eq:KS} (see Lemma~\ref{lem:charsstatessubNewtonian}), and (ii) a relative convexity inequality (see Lemma~\ref{lem:comparison from below}). The proof of the inequality for (ii) is rather involved, and is the reason we are restricted to the upper bound $k\le 2-N$. 
\item In order to compare $\mF[\rho]$ to $\mF[\bar\rho]$ for any radial function $\rho\in\mY_M^*$, we express $\rho$ as the push-forward of $\bar\rho$ by a radial convex function, and derive an expression for $\mF[\rho]$ in terms of the push-forward map and $\bar\rho$.
\item Jensen's inequality and the convexity result in Lemma~\ref{lem:comparison from below} allow to bound from below the interaction term and the potential term in $\mF[\rho]$.
 \item The characterisation of stationary states in Lemma~\ref{lem:charsstatessubNewtonian} allows to express the interaction term of the energy $\mF[\bar\rho]$ and the lower bound of $\mF[\rho]$ in terms of the diffusive term and potential term of the energies respectively. The key here is to apply the convexity estimates and the characterisation of the stationary states in such a way as to reveal a nice structure of this lower bound; it manifests a direct dependence on the parameter regime (diffusion-dominated vs fair-competition).
\item In order to compare with $\mF[\bar\rho]$, we need to remove the dependence on the push forward. This is achieved thanks to another set of estimates depending on the choice of regime. 
\item Finally, we investigate the equality cases of the inequality in Theorem~\ref{thm:main1}, and prove the claimed uniqueness result.
\end{itemize}

\subsection{Outline} 
In Section~\ref{sec:prelim}, we set up the necessary notation and take advantage of the radial symmetry to derive an explicit formula for the mean-field interaction potential in terms of hypergeomtric functions. Relevant results about hypergeometric functions are summarised in Appendix~\ref{sec:hypergeo}.
In Section~\ref{sec:ineq}, we prove a characterisation of radially symmetric stationary states that then allows us to show the functional inequality in Theorem~\ref{thm:main1} using optimal transport tools. The key ingredient for the proof of Theorem~\ref{thm:main1} is a convexity estimate on the radial interaction potential, see Lemma~\ref{lem:comparison from below}. The proof of the convexity estimate is more involved, and postponed to Appendix~\ref{sec:lem6} for the convenience of the reader. 
We conclude with the proof of Theorem~\ref{thm:main2}, which follows directly from the statement of Theorem~\ref{thm:main1}.

\section{Potentials of radial functions}\label{sec:prelim}
In this section, we prove preliminary results for generic radial functions with the goal to rewrite the energy $\mF[\rho]$ in terms of Gauss hypergeometric functions in the case of radial $\rho$. This will allow us to find a lower bound on the energy in Section~\ref{sec:ineq}. The main conclusion of this section is the following proposition:

\begin{proposition}\label{prop:sec2}
Let $\rho\in\mY_M^*$. Define
\begin{equation*}
\omega(r)=
\begin{cases}
 \frac{1}{2-N}M_\rho(r) r^{2-N},
&\text{ if } k= 2-N\,,\\
\int_{s = 0}^r \frac{r^k}{k} \vt\left(\dfrac s r\right)\rho(s) s^{N-1}\, ds,
&\text{ if } k\neq 2-N\,,
\end{cases}
\end{equation*}
where we denote by $M_\rho(\cdot)$ the cumulative mass of $\rho$ inside balls,
\begin{equation*}
M_\rho(r) = \sigma_N \int_{s = 0}^{r}  \rho(s) s^{N-1}\,ds\, ,
\end{equation*}
and where
\begin{equation}\label{eq:def function H}
\vt(s) = d_N\, F\left( -\dfrac k2,1-\dfrac{k+N}2; \dfrac N2;s^2  \right)\, ,\qquad d_N:=  2^{N-2}\sigma_{N-1}
 \frac{\Gamma\left(\frac{N-1}{2}\right)^2}{\Gamma(N-1)}
\end{equation}
for the Gauss hypergeometric function $F$ defined in \eqref{hyposeries} in the appendix, and with  $\sigma_N=2 \pi^{(N/2)}/\Gamma(N/2)$ denoting the surface area of the $N$-dimensional unit ball.
Then the expression for the energy functional $\mF$ in radial coordinates is given by
\begin{align}\label{eq:Fsubnewtonian}
 \mF[\rho]
  =&\frac{\sigma_N}{m-1}\int_{r=0}^{\infty} \rho(r)^m r^{N-1}\, dr
  + \sigma_N\int_{r = 0}^{\infty}\omega(r)\rho(r)r^{N-1}s^{N-1}dr\\
   &\quad +\mur \frac{\sigma_N}{2} \int_{r=0}^\infty r^2 \rho(r)r^{N-1}\,dr\,.\notag
\end{align}
\end{proposition}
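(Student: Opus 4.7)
The plan is to work term by term in polar coordinates. The diffusion and confinement terms are immediate: for radial $\rho$ one has $\int\rho^m\,dx=\sigma_N\int_0^\infty\rho(r)^m r^{N-1}\,dr$ and $\int|x|^2\rho\,dx=\sigma_N\int_0^\infty r^2\rho(r)r^{N-1}\,dr$, matching the first and third terms of \eqref{eq:Fsubnewtonian}. All the non-trivial work is in the interaction term $\tfrac{1}{2k}\iint|x-y|^k\rho(x)\rho(y)\,dx\,dy$. Writing $x=r\xi$ and $y=s\xi'$ with $\xi,\xi'\in S^{N-1}$, radial symmetry recasts this as
\begin{equation*}
\frac{1}{2k}\int_0^\infty\int_0^\infty \rho(r)\rho(s)r^{N-1}s^{N-1}A(r,s)\,dr\,ds,\quad A(r,s):=\iint_{S^{N-1}\times S^{N-1}}|r\xi-s\xi'|^k\,d\xi\,d\xi',
\end{equation*}
and rotational invariance collapses one sphere integration to the constant $\sigma_N$, reducing matters to a single spherical integral $\int_{S^{N-1}}|re_1-s\xi'|^k\,d\xi'$.

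Setting $R:=\max(r,s)$ and $u:=\min(r,s)/R\in[0,1)$, I factor $|re_1-s\xi'|^k = R^k(1-2u\xi'_1+u^2)^{k/2}$ and apply the standard slicing identity $\int_{S^{N-1}}f(\xi_1)\,d\xi = \sigma_{N-1}\int_{-1}^1 f(t)(1-t^2)^{(N-3)/2}\,dt$ to reduce $A(r,s)$ to $\sigma_N R^k J(u)$ with
\begin{equation*}
J(u):=\sigma_{N-1}\int_{-1}^1 (1-2ut+u^2)^{k/2}(1-t^2)^{(N-3)/2}\,dt.
\end{equation*}
The core identification $J(u)=\vt(u)$ proceeds in two steps. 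First, the substitution $t=2\tau-1$ turns $1-2ut+u^2$ into $(1+u)^2-4u\tau$ and $1-t^2$ into $4\tau(1-\tau)$; after factoring out $(1+u)^k$, the integrand matches Euler's integral representation $F(a,b;c;z)=\tfrac{\Gamma(c)}{\Gamma(b)\Gamma(c-b)}\int_0^1\tau^{b-1}(1-\tau)^{c-b-1}(1-z\tau)^{-a}\,d\tau$ with $a=-k/2$, $b=(N-1)/2$, $c=N-1$ and $z=4u/(1+u)^2$. Second, I apply the quadratic transformation
\begin{equation*}
F\!\left(a,b;2b;\tfrac{4z}{(1+z)^2}\right)=(1+z)^{2a}\,F\!\left(a,\,a-b+\tfrac{1}{2};\,b+\tfrac{1}{2};\,z^2\right),
\end{equation*}
whose parameters align as $a-b+1/2=1-(k+N)/2$ and $b+1/2=N/2$, so that the factor $(1+u)^k$ cancels cleanly and the numerical constants coalesce (via Legendre's duplication formula) into exactly $d_N$.

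Once $A(r,s)=\sigma_N R^k\vt(u)$ is in hand, the proposition follows by Fubini and the symmetry $r\leftrightarrow s$ (the regions where $s<r$ and $s>r$ contribute equally), yielding $\sigma_N\int_0^\infty\omega(r)\rho(r)r^{N-1}\,dr$ for the case $k\neq 2-N$. In the Newtonian case $k=2-N$ the second hypergeometric parameter $1-(k+N)/2$ vanishes, so $F\equiv 1$ and hence $\vt\equiv d_N$; Legendre duplication gives $d_N=\sigma_N$, so the inner $s$-integral defining $\omega(r)$ collapses to $\sigma_N^{-1}M_\rho(r)$, recovering the stated formula $\omega(r)=(2-N)^{-1}M_\rho(r)r^{2-N}$. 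The main technical obstacle is the hypergeometric bookkeeping: selecting the correct quadratic transformation so that the argument $4z/(1+z)^2$ collapses to $z^2$ with a matching prefactor, and verifying that the accumulated Gamma constants produce precisely $d_N$. The rest is a routine reduction using radial symmetry and Fubini.
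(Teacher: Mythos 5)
Your proposal is correct and follows essentially the same route as the paper: reduce the interaction integral to a single spherical integral in polar coordinates, recognise Euler's integral representation of $F\left(-\tfrac k2,\tfrac{N-1}2;N-1;\tfrac{4u}{(1+u)^2}\right)$, apply the quadratic transformation \eqref{quadratictrans} to arrive at $\vt$, and conclude by Fubini together with the $r\leftrightarrow s$ symmetry. The only (minor and harmless) deviation is the Newtonian case $k=2-N$, which the paper treats directly via Newton's shell theorem, whereas you recover it as the degenerate case where the hypergeometric series reduces to $1$ and $d_N=\sigma_N$ by Legendre duplication -- both arguments are valid.
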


For any radial function $\rho: \RR^N \to \RR$, still denoting by $\rho$ the radial profile of $\rho$, we can write
$$
\int_{\RR^N} \rho(x)\,dx 
= \int_0^{\infty} \int_0^{2\pi} \rho(r) r^{N-1} \sin^{N-2}(\theta)\,d\theta dr
= \sigma_N \int_0^{\infty} \rho(r) r^{N-1} \, dr\, .
$$
Further, if $\delta\ge 1$, then $(\cdot)^\delta$ is convex on the interval $[a,b]$, and by Jensen's inequality 
\begin{equation}\label{eq:weightedJensen}
 \left(\int_a^b \rho(r)\frac{Nr^{N-1}\,dr}{b^N-a^N}\right)^\delta
\leq
\int_a^b \rho(r)^\delta \frac{Nr^{N-1}\,dr}{b^N-a^N}
\end{equation}
with equality if and only if $\rho$ is constant on $[a,b]$.

In the following two lemmata, we derive expressions for the interaction term in the energy $\mF$ for generic radial functions $\rho$ using polar coordinates that will be useful both for the proof of Proposition~\ref{prop:sec2}, and in the sequel.

\begin{lemma}\label{lem:Riesz1}
For $N\geq 2$ and a given radial function $\rho:\RR^N\to \RR$, we have for $|x|=r$
\begin{align*}
|x|^k\ast \rho(x)
&= 2^{N-2} {\sigma_{N-1}}\int_0^\infty (r+\eta)^k H\left(A,B;C;\frac{4r\eta}{(r+\eta)^2}\right)\rho(\eta)\eta^{N-1}\, d\eta
\end{align*}
 where $H(A,B;C; \cdot)$ is given in terms of hypergeometric functions as defined in \eqref{defH} with
 $$
 A=-\frac{k}{2}\, , \qquad B=\frac{N-1}{2}\, , \qquad C=N-1\, .
 $$
\end{lemma}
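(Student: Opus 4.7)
The plan is to unfold the convolution in polar coordinates around $x$ and reduce the resulting angular integral to an Euler integral for the Gauss hypergeometric function. By radial symmetry of $\rho$, I may fix $x=re_1$ with $r=|x|$; writing $y=\eta\omega$ with $\omega\in\mathbb{S}^{N-1}$ we have $|x-y|^2=r^2+\eta^2-2r\eta(\omega\cdot e_1)$, and Fubini gives
\begin{equation*}
|x|^k\ast\rho(x)=\int_0^\infty\rho(\eta)\,\eta^{N-1}\left(\int_{\mathbb{S}^{N-1}}(r^2+\eta^2-2r\eta\,\omega\cdot e_1)^{k/2}\,d\omega\right)d\eta.
\end{equation*}
Parametrising $\mathbb{S}^{N-1}$ by the polar angle $\theta\in[0,\pi]$ from $e_1$ together with an azimuthal direction in $\mathbb{S}^{N-2}$ reduces the inner integral to $\sigma_{N-1}\int_0^\pi(r^2+\eta^2-2r\eta\cos\theta)^{k/2}\sin^{N-2}\theta\,d\theta$.

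I then use the factorisation $r^2+\eta^2-2r\eta\cos\theta=(r+\eta)^2-4r\eta\cos^2(\theta/2)$ to pull out $(r+\eta)^k$, and perform the substitution $v=\cos^2(\theta/2)$, which after reversing the limits of integration gives $\sin^{N-2}\theta\,d\theta=2^{N-2}(v(1-v))^{(N-3)/2}\,dv$. Setting $z:=4r\eta/(r+\eta)^2\in[0,1]$, the angular integral becomes
\begin{equation*}
2^{N-2}(r+\eta)^k\int_0^1 v^{(N-3)/2}(1-v)^{(N-3)/2}(1-zv)^{k/2}\,dv.
\end{equation*}
Matching this against the Euler integral representation
\begin{equation*}
F(a,b;c;z)=\frac{\Gamma(c)}{\Gamma(b)\Gamma(c-b)}\int_0^1 v^{b-1}(1-v)^{c-b-1}(1-zv)^{-a}\,dv
\end{equation*}
with $a=-k/2$, $b=(N-1)/2$, $c=N-1$ identifies the angular integral as $\tfrac{\Gamma((N-1)/2)^2}{\Gamma(N-1)}\,F(-k/2,(N-1)/2;N-1;z)$. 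Reintroducing the prefactor $\sigma_{N-1}$ and invoking the definition \eqref{defH} of $H(A,B;C;\cdot)$ yields the claimed identity with $A=-k/2$, $B=(N-1)/2$, $C=N-1$.

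The main obstacle is the singular endpoint range $-N<k\le 1-N$, where the Euler integrand ceases to be integrable at $v=1$ precisely when $z=1$, i.e., when $\eta=r$, so that the pointwise identity on the angular integral is formal at the diagonal. The manipulations above nonetheless remain legitimate once viewed inside the $\eta$-integration, since $\{\eta=r\}$ has measure zero in $(0,\infty)$ and its contribution is absorbed by local integrability of $|\cdot|^k$ against $\rho\in L^1\cap L^\infty$; this can be made rigorous by truncating $\rho$ away from the diagonal, or by an $\varepsilon$-regularisation of $|x|^k$ combined with dominated convergence, and then passing to the limit. Apart from this endpoint bookkeeping, the argument is a routine hypergeometric calculation, and I do not expect further difficulties.
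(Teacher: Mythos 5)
Your proposal is correct and follows essentially the same route as the paper: reduce the convolution to the angular integral $\sigma_{N-1}\int_0^\pi(r^2+\eta^2-2r\eta\cos\theta)^{k/2}\sin^{N-2}\theta\,d\theta$ (which the paper quotes from the literature), factor out $(r+\eta)^k$ via $r^2+\eta^2-2r\eta\cos\theta=(r+\eta)^2-4r\eta\cos^2(\theta/2)$, substitute $t=\cos^2(\theta/2)$, and identify the resulting Euler integral with $H(-k/2,(N-1)/2;N-1;4r\eta/(r+\eta)^2)$ as in \eqref{defH}. Your extra remark on the non-integrable endpoint at $\eta=r$ for $-N<k\le 1-N$ is a harmless refinement (the set $\{\eta=r\}$ is null and the kernel is positive, so Tonelli covers it) that the paper leaves implicit.
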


\begin{proof}
We compute as in \cite[Theorem 5]{TS}, see also \cite{BCLR}, \cite{Dong}  or \cite[\S 1.3]{Dre},
\begin{equation}\label{radialrepresentation}
|x|^k\ast\rho(x)
= {\sigma_{N-1}} \int_0^{\infty} \left(\int_0^\pi
\left(|x|^2+\eta^2-2|x|\eta\cos\theta\right)^{k/2}\,\sin^{N-2}\!\theta\,d\theta\right)\,\rho(\eta) \eta^{N-1}\,d\eta\, .
\end{equation}
Let us define
\begin{align*}\label{esti1}
\Theta_{k,N}(r,\eta)&:= {\sigma_{N-1}}  \int_0^\pi \left(r^2+\eta^2-2r\eta \cos(\theta)\right)^{k/2} \sin^{N-2}(\theta)\, d\theta
=
\begin{cases}
 r^k \vartheta_{k,N}\left(\eta/r\right), &\eta<r\, ,\\
 \eta^k \vartheta_{k,N}\left(r/\eta\right), &r<\eta\, ,
\end{cases}
\end{align*}
  where, for $u\in[0,1)$,
\begin{align*}
\vartheta_{k,N}(u)&:= {\sigma_{N-1}}  \int_0^\pi \left(1+u^2-2u \cos(\theta)\right)^{k/2} \sin^{N-2}(\theta)\, d\theta\\
&=  {\sigma_{N-1}}  \left(1+u\right)^k\int_0^\pi \left(1-4\frac{u}{(1+u)^2} \cos^2\left(\frac{\theta}{2}\right)\right)^{k/2}
\sin^{N-2}(\theta)\, d\theta\, .
\end{align*}
Using the change of variables $t=\cos^2\left(\frac{\theta}{2}\right)$, we get from the integral formulation of hypergeometric functions \eqref{defH} ,
\begin{align*}
 \vartheta_{k,N}(u)&=2^{N-2} {\sigma_{N-1}}  \left(1+u\right)^k\int_0^1 \left(1-\frac{4u}{(1+u)^2}t\right)^{k/2} t^{\frac{N-3}{2}}
 \left(1-t\right)^{\frac{N-3}{2}}\, dt\notag\\
 &= 2^{N-2} {\sigma_{N-1}}  \left(1+u\right)^k H\left(A,B;C;4u/(1+u)^2\right)\,.
\end{align*}
\end{proof}


In order to prove a lower bound on the energy $\mF$ (see Section~\ref{sec:ineq}), our goal is to extend the techniques in \cite{CCHCetraro} to higher dimensions in the case of more singular interaction kernels $-N<k \le 2-N$. For this purpose we need to rewrite the interaction term of the functional even further. Here, we will make use of the formulation in terms of hypergeometric functions as introduced in Lemma \ref{lem:Riesz1}.

\begin{lemma} Let $N\geq 2$ and $k>-N$. For a given radial function $\rho:\RR^N\to \RR$, the attractive mean-field potential rewrites as follows for $|x|=r$:
\begin{equation}
|x|^k*\rho(x) = r^k \int_{\eta = 0}^{r} \vt\left(\dfrac \eta r\right) \rho(\eta) \eta^{N-1}\, d\eta +  \int_{\eta = r}^{\infty} \eta^k \vt\left(\dfrac r \eta\right) \rho(\eta)\eta^{N-1}\, d\eta\,, \label{eq:W*rho}
\end{equation}
where
\begin{equation}\label{eq:def function H}
\vt(s) = d_N\, F\left( -\dfrac k2,1-\dfrac{k+N}2; \dfrac N2;s^2  \right)\, ,\qquad d_N:=  2^{N-2}\sigma_{N-1}
 \frac{\Gamma(B)\Gamma(C-B)}{\Gamma(C)}\, .\end{equation}
with constants $B,C$ as given in Lemma~\ref{lem:Riesz1}.
\end{lemma}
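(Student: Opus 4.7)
The plan is to build directly on the computations inside the proof of Lemma~\ref{lem:Riesz1}, which already produced the piecewise factorisation
\[
(r+\eta)^{k}\, H\!\left(A,B;C;\tfrac{4r\eta}{(r+\eta)^{2}}\right) \;=\;
\begin{cases} r^{k}\,\Phi(\eta/r), & \eta < r,\\[1mm] \eta^{k}\,\Phi(r/\eta), & \eta > r,\end{cases}
\]
with $\Phi(u) := (1+u)^{k} H(A,B;C;4u/(1+u)^{2})$. Splitting the single $\eta$-integral of Lemma~\ref{lem:Riesz1} at $\eta = r$ immediately yields the two-piece form \eqref{eq:W*rho}, provided one identifies $\vt(u)$ with $2^{N-2}\sigma_{N-1}\,\Phi(u)$. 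The entire statement therefore reduces to the algebraic identity
\[
2^{N-2}\sigma_{N-1}\, \Phi(u) \;=\; d_N\, F\!\left(-\tfrac{k}{2}, 1-\tfrac{k+N}{2}; \tfrac{N}{2}; u^2\right).
\]

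To establish this, I would first convert $H$ to the Gauss hypergeometric function $F$ via the Euler integral representation, namely $H(A,B;C;z) = \tfrac{\Gamma(B)\Gamma(C-B)}{\Gamma(C)} F(A,B;C;z)$, which exactly accounts for the prefactor $d_N$ appearing in \eqref{eq:def function H}. Plugging in $A=-k/2$, $B=(N-1)/2$, $C=N-1$, the identification then reduces to
\[
(1+u)^{k}\, F\!\left(-\tfrac{k}{2}, \tfrac{N-1}{2}; N-1; \tfrac{4u}{(1+u)^{2}}\right)
\;=\; F\!\left(-\tfrac{k}{2}, 1-\tfrac{k+N}{2}; \tfrac{N}{2}; u^{2}\right),
\]
which is the $a=-k/2$, $b=(N-1)/2$ specialisation of the classical quadratic transformation
\[
F\!\left(a,b;\,2b;\,\tfrac{4u}{(1+u)^{2}}\right) \;=\; (1+u)^{2a}\, F\!\left(a, a - b + \tfrac{1}{2};\, b + \tfrac{1}{2};\, u^{2}\right),
\]
with the correspondences $2b=N-1=C$ and $2a=-k$. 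I would simply invoke this transformation from Appendix~\ref{sec:hypergeo}.

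The only (mild) obstacle is having exactly this quadratic transformation at our disposal. It can be deduced from Kummer's table of solutions of the hypergeometric differential equation through the change of variable $\sqrt{1-z} = (1-u)/(1+u)$, which bijectively maps $z = 4u/(1+u)^{2} \in [0,1]$ to $u \in [0,1]$; both hypergeometric series converge on their respective discs of radius~$1$, so the identity is valid for $u \in [0,1)$. The boundary case $u=1$ (that is, $r=\eta$) occurs on a measure-zero set in the convolution integral and causes no issue, the integrand in \eqref{radialrepresentation} being integrable there thanks to $k>-N$.
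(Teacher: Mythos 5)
Your proposal is correct and follows essentially the same route as the paper: split the $\eta$-integral at $\eta=r$ using the piecewise factorisation of $\Theta_{k,N}$ from the proof of Lemma~\ref{lem:Riesz1}, convert $H$ to $F$ via \eqref{defH} (which produces the constant $d_N$), and then apply the quadratic transformation \eqref{quadratictrans} with $a=-k/2$, $b=(N-1)/2$. Your closing remark on deriving the quadratic transformation is unnecessary, since the paper simply cites it as \cite[Formula 15.3.17]{Abramowitz} in Appendix~\ref{sec:hypergeo}.
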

\begin{proof}
As in the proof of Lemma~\ref{lem:Riesz1},
\begin{align*}
|x|^k\ast \rho(x)
=&\int_0^{\infty} \Theta_{k,N}(r,\eta)\rho(\eta) \eta^{N-1}\,d\eta\\
=&\int_0^{r} r^k \vartheta_{k,N}\left(\frac{\eta}{r}\right)\rho(\eta) \eta^{N-1}\,d\eta
+ \int_r^{\infty} \eta^k \vartheta_{k,N}\left(\frac{r}{\eta}\right)\rho(\eta) \eta^{N-1}\,d\eta\, ,
\end{align*}
and by \eqref{defH}, $\vt$ can be written as
\begin{align*}
 \vartheta_{k,N}(s)
 =d_N \left(1+s\right)^k F(A,B;C;4s/(1+s)^2)\, , 
 \qquad d_N:=  2^{N-2}\sigma_{N-1}
 \frac{\Gamma(B)\Gamma(C-B)}{\Gamma(C)}\, ,
\end{align*}
 with
 $$
 A=-\frac{k}{2}\, , \qquad B=\frac{N-1}{2}\, , \qquad C=N-1\,.
 $$
 Using the quadratic transformation \eqref{quadratictrans}, we have for $s \in (0,1)$
\begin{equation*}
F\left( - \dfrac k2,\frac{N-1}{2};N-1; \dfrac{4 s}{\left( 1+s\right)^2}\right) = \left(1 + s\right)^{-k}F\left( -\dfrac k2,1-\dfrac {k+N}2;\dfrac N2;s^2 \right)\, ,
\end{equation*}
and so \eqref{eq:W*rho}-\eqref{eq:def function H} follow.
\end{proof}

The previous two lemmata allow to rewrite the energy $\mF$ in the desired form for any function $\rho\in\mY_M^*$.

\begin{proof}[Proof of Proposition~\ref{prop:sec2}]
We begin with the case $k\neq 2-N$.
Using \eqref{eq:W*rho}, the free energy becomes
\begin{align*}
 \mF[\rho]
 =&\frac{\sigma_N}{m-1}\int_{r=0}^{\infty} \rho(r)^m r^{N-1}\, dr
 + \frac{\sigma_N}{2} \int_{r=0}^{\infty} \rho(r) (W_k\ast \rho)(r) r^{N-1}\, dr\notag\\
 &\quad +\mur \frac{\sigma_N}{2} \int_{r=0}^\infty r^2 \rho(r)r^{N-1}\,dr\notag\\
 =&\frac{\sigma_N}{m-1}\int_{r=0}^{\infty} \rho(r)^m r^{N-1}\, dr
 + \frac{\sigma_N}{2} \int_{r=0}^{\infty}\int_{s = 0}^{r} 
  \frac{r^k}{k} \vt\left(\dfrac s r\right) \rho(r)\rho(s) r^{N-1}s^{N-1}\, dsdr\notag\\
 &\quad+ \frac{\sigma_N}{2} \int_{r=0}^{\infty}\int_{s = r}^{\infty} 
  \frac{s^k}{k} \vt\left(\dfrac r s\right) \rho(r)\rho(s) r^{N-1}s^{N-1}\, dsdr\notag\\
   &\quad +\mur \frac{\sigma_N}{2} \int_{r=0}^\infty r^2 \rho(r)r^{N-1}\,dr\notag\\
  =&\frac{\sigma_N}{m-1}\int_{r=0}^{\infty} \rho(r)^m r^{N-1}\, dr
  + \sigma_N\int_{r = 0}^{\infty}\int_{s = 0}^r \frac{r^k}{k} \vt\left(\dfrac s r\right)  \rho(r)\rho(s) r^{N-1}s^{N-1}\, dsdr\\
   &\quad +\mur \frac{\sigma_N}{2} \int_{r=0}^\infty r^2 \rho(r)r^{N-1}\,dr\,,\notag
\end{align*}
where we swapped the order of integration in the second last line.\\

In the Newtonian case $k=2-N$, we can simplify further. 
By Newton's Shell theorem \cite{LiebLoss},
$$
\partial_r \left(\frac{r^{2-N}}{2-N} \ast \rho\right)(r) = r^{1-N} M_\rho(r)\, .
$$
We have (cf. the Newton's Theorem, \cite[Theorem 9.7]{LiebLoss}):
\begin{equation*}
 \sigma_{N-1} \int_{\theta = 0}^{\pi} \left|r^2 + \eta^2 - 2r\eta\cos(\theta)\right|^{(2-N)/2} \sin(\theta)^{N-2}\, d\theta =  \sigma_N (r\vee \eta )^{2-N}\,, \quad r\vee 
\eta = \max(r,\eta).
\end{equation*}
Therefore, the interaction term of the energy simplifies and using the expression \eqref{radialrepresentation} from the proof of Lemma~\ref{lem:Riesz1}, we obtain
\begin{align*}
 &\iint_{\RR^N\times\RR^N} |x-y|^{2-N}\rho(x)\rho(y)\, dxdy
 = \sigma_N \int_{r=0}^{\infty} \left( \int_{\eta=0}^{\infty} \sigma_N  (r\vee \eta )^{2-N} \rho(\eta)\eta^{N-1}\, d\eta\right) \rho(r) r^{N-1}\, dr\\
 &\quad =\sigma_N \int_{r=0}^{\infty} \left( \sigma_N r^{2-N}\int_{\eta=0}^r \rho(\eta)\eta^{N-1}\, d\eta + \sigma_N \int_{\eta=r}^{\infty} \rho(\eta)\eta\, d\eta \right) \rho(r) r^{N-1}\, dr\\
 &\quad =2 \sigma_N \int_{r=0}^{\infty} \rho(r) M_\rho(r) r\, dr
\end{align*}
by changing the domain of integration in the second term. This concludes the proof of Proposition~\ref{prop:sec2}.
\end{proof}

\section{Functional Inequality}\label{sec:ineq}
In this section, we will prove our main result Theorem~\ref{thm:main1}, from which Theorem~\ref{thm:main2} then immediately follows. To obtain the functional inequality, we need to show a lower bound on the energy $\mF[\rho]$ for generic radial functions $\rho\in\mY_M^*$. This lower bound is related to the PDE \eqref{eq:KS} via its equilibrium states. More precisely, the lower bound of the energy is given by $\mF$ evaluated at the stationary states of \eqref{eq:KS}.  Based on this connection between the energy and the PDE, the two key ingredients in our proof of Theorem~\ref{thm:main1} are (i) a useful characterisation of radial stationary states for the PDE \eqref{eq:KS} (see Lemma~\ref{lem:charsstatessubNewtonian}), and (ii) a relative convexity inequality (see Lemma~\ref{lem:comparison from below}). The proof of the inequality in (ii) is rather involved, and is the reason we are restricted to the upper bound $k<2-N$. As we are not aware of any suitable inequality involving hypergeometric functions, we argue directly from the representation of hypergeometric functions using series, and postpone the proof to Appendix~\ref{sec:lem6}.

 Consider a stationary state $\bar\rho$ according to Definition~\ref{def:sstates}. If $\bar\rho$ is radial, it follows from \eqref{eq:W*rho} that $\bar\rho$ solves 
\begin{align*} 
0
 &= \partial_r \bar\rho^m
+\rho\partial_r \left( \frac{r^k}{k} \int_{s = 0}^{r} \vt\left(\dfrac s r\right) \bar\rho(s) s^{N-1}\, ds +  \int_{s = r}^{\infty} \frac{s^k}{k} \vt\left(\dfrac r s\right) \rho(s)s^{N-1}  \, ds\right)
+\mur r\bar\rho
\end{align*}
if $k\neq 2-N$, and 
\begin{equation*}\label{eq:KSnewton}
0=\partial_r \rho^m +\bar\rho M_{\bar\rho}
 +\mur r\bar\rho
\end{equation*}
in the harmonic case $k=2-N$.
In the sequel we drop the indices in the notation $\vt$ for simplicity, and we write
$$
d\bar r := \bar\rho(r)r^{N-1}\,dr\,.
$$

\begin{lemma}[Characterisation of steady states]\label{lem:charsstatessubNewtonian}
Let $N\geq 2$ and $k>-N$. If $k\neq 2-N$, then any radial stationary state can be written in the form 
    \begin{align}
 \bar \rho(r)^m &=   \int_{ s=r}^{\infty}   \int_{t = 0}^s \left( s^{k-N} \vartheta\left(\dfrac t s\right) - \frac{s^{k-1-N}}{k} t \vartheta'\left(\dfrac t s\right)  \right)  \,d\bar t d\bar s
\notag
\\ &\qquad +  \int_{ s=r}^{\infty}  \int_{t=s}^{\infty}  s^{1-N}\frac{t^{k-1}}{k} \vartheta'\left(\dfrac s t\right) \,d\bar t d\bar s
+\mur \int_{s=r}^\infty s^{2-N}\, d\bar s\,.
\label{eq:StSt characterization radial k}
\end{align}
In the case $k=2-N$, the $\bar\rho$ satisfies
\begin{equation} \label{eq:StStcharacterization radial}
 \bar \rho^m(r)  =  \int_{s=r}^{\infty}  M_{\bar\rho}(s)   s^{2-2N} \, d\bar s  
 +\mur \int_{s=r}^\infty s^{2-N}\, d\bar s\, .
\end{equation}
\end{lemma}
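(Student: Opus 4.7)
The plan is to integrate the radial stationary equation once from $r$ to $\infty$, exploiting that by the symmetry and support results of \cite{CHVY,CHMV} any radial $\bar\rho$ in the sense of Definition~\ref{def:sstates} is compactly supported, so that $\bar\rho^m(s)\to 0$ as $s\to\infty$, and then to plug in an explicit expression for $\partial_s(W_k\ast\bar\rho)(s)$. Writing $\bar S_k:=W_k\ast\bar\rho$, the radial form of the equation in Definition~\ref{def:sstates} reads $\partial_r\bar\rho^m(r) = -\bar\rho(r)\,\partial_r\bar S_k(r) - \mur r\bar\rho(r)$ on the support, and integration on $[r,\infty)$ yields
$$\bar\rho^m(r) = \int_r^\infty\bar\rho(s)\,\partial_s\bar S_k(s)\,ds + \mur\int_r^\infty s\,\bar\rho(s)\,ds\,.$$

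For $k\neq 2-N$ I split $\bar S_k(s) = \tfrac{s^k}{k}\int_0^s\vt(t/s)\bar\rho(t) t^{N-1}\,dt + \int_s^\infty\tfrac{t^k}{k}\vt(s/t)\bar\rho(t) t^{N-1}\,dt$ as in \eqref{eq:W*rho} and differentiate via Leibniz's rule. The two boundary contributions produced by the moving endpoint $t=s$ are $\pm\tfrac{s^k}{k}\vt(1)\bar\rho(s) s^{N-1}$ and cancel exactly; the surviving pieces come from $\partial_s(s^k/k)=s^{k-1}$, $\partial_s\vt(t/s)=-(t/s^2)\vt'(t/s)$ and $\partial_s\vt(s/t)=(1/t)\vt'(s/t)$, giving
$$\partial_s\bar S_k(s) = \int_0^s\left[s^{k-1}\vt(t/s) - \frac{s^{k-2}}{k}\,t\,\vt'(t/s)\right]\bar\rho(t) t^{N-1}\,dt + \int_s^\infty\frac{t^{k-1}}{k}\vt'(s/t)\,\bar\rho(t) t^{N-1}\,dt\,.$$
Substituting this into the displayed identity above and rewriting in the measure $d\bar t = \bar\rho(t) t^{N-1}\,dt$ — noting $\bar\rho(s)\,ds = s^{1-N}\,d\bar s$ and $s\,\bar\rho(s)\,ds = s^{2-N}\,d\bar s$ — produces \eqref{eq:StSt characterization radial k} directly. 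For the Newtonian case $k=2-N$ I bypass the hypergeometric computation and invoke Newton's shell theorem as in the proof of Proposition~\ref{prop:sec2}, giving $\partial_s\bar S_k(s) = s^{1-N}M_{\bar\rho}(s)$; the same integration then yields \eqref{eq:StStcharacterization radial} via $\bar\rho(s)\,s^{1-N}M_{\bar\rho}(s)\,ds = s^{2-2N}M_{\bar\rho}(s)\,d\bar s$.

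The main technical obstacle is justifying the cancellation of endpoint terms in the singular range $-N<k\leq 1-N$, where $\vt$ is unbounded at $1$ (the angular integral defining $\vt$ has a boundary singularity of order $\theta^{k+N-2}$). In this range the two integrals making up $\bar S_k$ are not separately differentiable at the matching point $t=s$, and my plan is to carry out the Leibniz computation on the regularisation obtained by replacing $|x|^k$ with $(\varepsilon^2+|x|^2)^{k/2}$, in which both boundary terms are finite and equal by continuity, and then to pass to the limit $\varepsilon\to 0^+$. This is compatible with the principal-value definition \eqref{gradS} of $\nabla S_k$, so the limit identity still characterises precisely the stationary states of Definition~\ref{def:sstates}. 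Once this cancellation step is secured, the remainder of the argument is a routine interchange of integrals and change of measure.
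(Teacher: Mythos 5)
Your proposal is correct and follows essentially the same route as the paper: integrate the radial stationary equation from $r$ to $\infty$, differentiate the representation \eqref{eq:W*rho} of $W_k\ast\bar\rho$ with the boundary terms at $t=s$ cancelling, and invoke Newton's shell theorem for $k=2-N$. The only difference is your explicit regularisation argument for the endpoint cancellation in the singular range $-N<k\leq 1-N$, a point the paper's proof passes over silently, so this is a refinement of the same argument rather than a different approach.
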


\begin{proof}
By Definition~\ref{def:sstates}, any stationary state $\bar \rho$ satisfies
\begin{align*}
-   \dfrac d{dr} \bar \rho(r)^m  & =  \bar \rho(r)  \dfrac d{dr}\left(W_k*\bar \rho\right)(r) +\mur r \bar\rho(r)
\, . 
\end{align*}
Integrating between $r$ and $\infty$, we deduce that 
\begin{equation*}
 \bar \rho(r)^m = \int_{ s = r}^{\infty} \bar \rho(s) \dfrac d{ds}\left(W_k*\bar \rho\right)(s)\, ds
 +\mur \int_{s=r}^\infty s^{2-N}\bar\rho(s)s^{N-1}\,ds
 \, .
\end{equation*}
It remains to examine the term $(d/ds)(W_k*\bar \rho)(s)$. 
We differentiate the expression \eqref{eq:W*rho} to obtain
\begin{align*}
 \dfrac d{ds}\left(W_k*\bar \rho\right)(s)
&=   
 \int_{t = 0}^s \left(  s^{k-1} \vartheta\left(\dfrac t s\right) - \frac{s^{k-2}}{k} t \vartheta'\left(\dfrac t s\right)  \right)   \bar \rho(t)t^{N-1}\, dt
+  \int_{t=s}^{\infty} \frac{t^{k-1}}{k} \vartheta'\left(\dfrac s t\right) \bar \rho(t)t^{N-1} \,  dt\, .
\end{align*}
This yields the claimed characterisation.
For $k=2-N$, the result follows directly from Newton's Shell Theorem, see the end of Section~\ref{sec:prelim}.
\end{proof}

\begin{figure}
\begin{center}
\includegraphics[width = .48\linewidth]{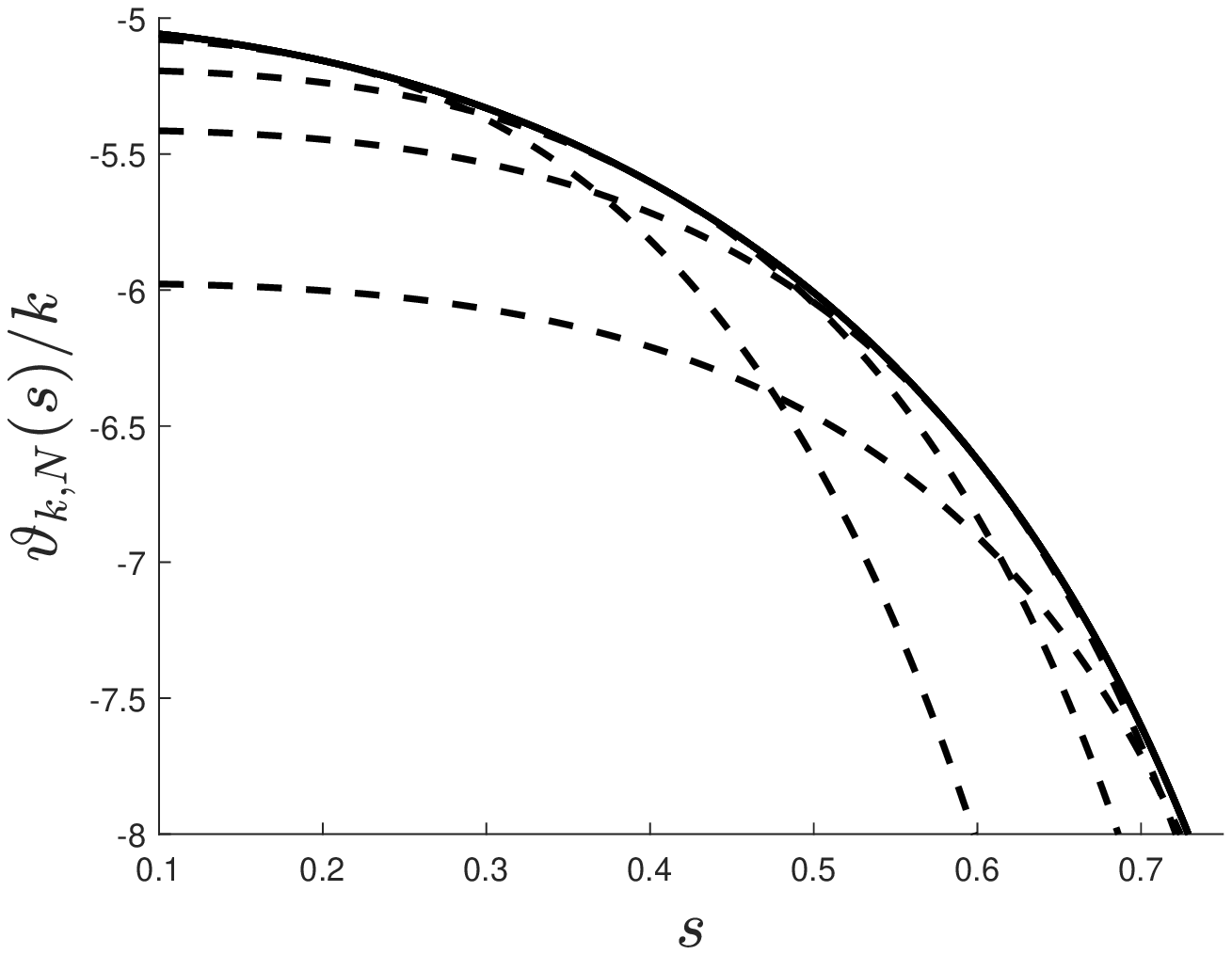} \,
\includegraphics[width = .48\linewidth]{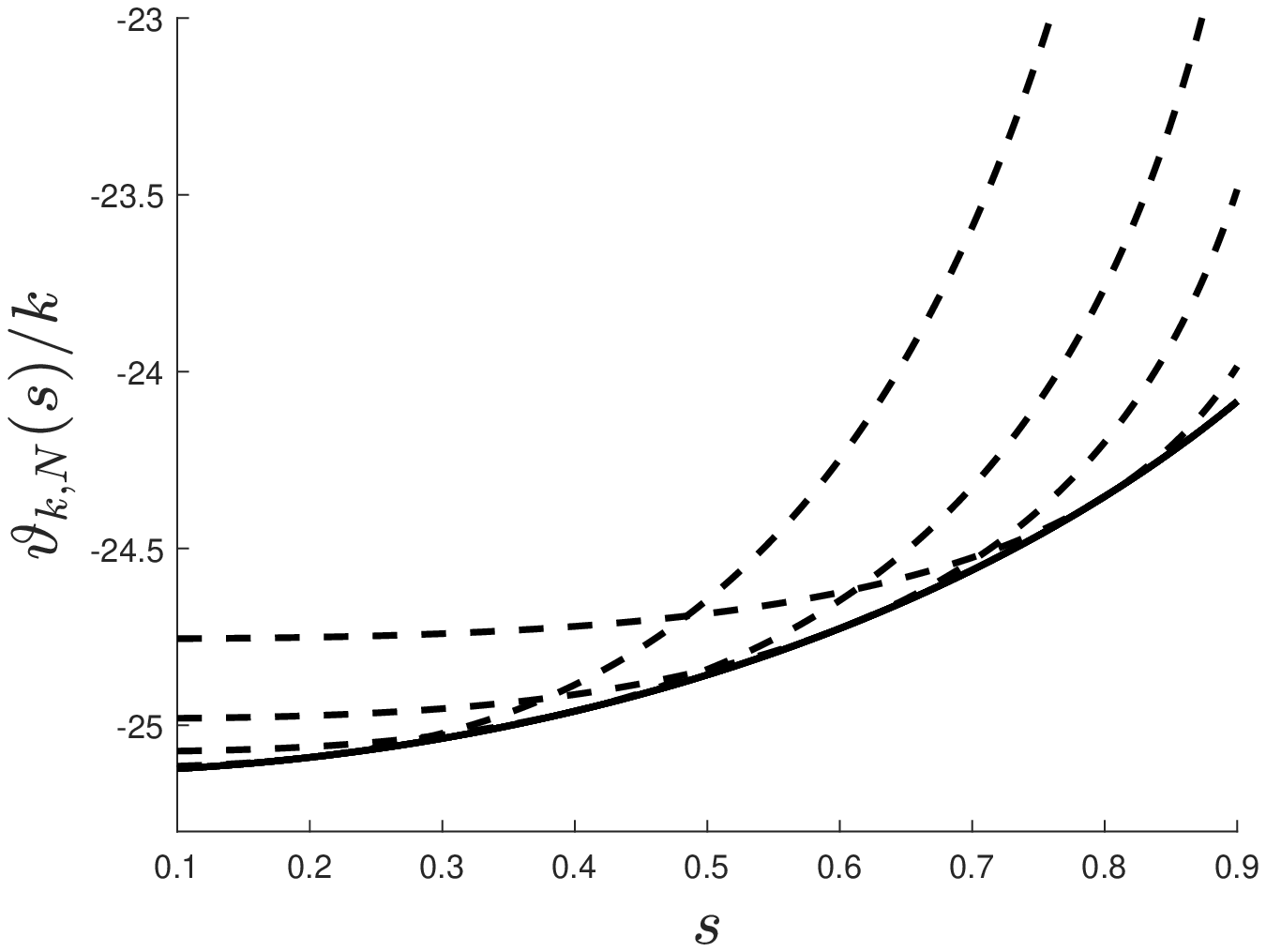} \\
\caption{Numerical illustration of Lemma \ref{lem:comparison from below} for $N = 3$, and (left) $k = -2.5$ or (right) $k = -0.5$ for tangents $c=0.2, 0.4, 0.6, 0.8$ (dotted lines). For $k<2-N$ where convexity holds all tangents lie below the curve $\vt(s)/k$ (black line) as shown in Lemma \ref{lem:comparison from below}, which does not hold for $k>2-N$.}
\label{fig:convexity}
\end{center}
\end{figure}

It follows from the above characterisation that for any function $g:[0,\infty)\to\RR$, a stationary state of \eqref{eq:KSrescradial} satisfies
\begin{align}\label{eq:SSg}
    \int_{a=0}^\infty g(a)\bar\rho^m(a)a^{N-1}\,da
    &= \int_{a=0}^\infty\int_{b=a}^\infty\int_{s=0}^b g(a)a^{N-1} \left[ b^{k-N} \vartheta\left(\dfrac s b\right) - \frac{b^{k-1-N}}{k} s \vartheta'\left(\dfrac s b\right)  \right]  \,d\bar s d\bar b da\notag\\
    &\quad+ \int_{a=0}^\infty\int_{b=a}^\infty\int_{s=b}^\infty g(a)a^{N-1}b^{1-N} \frac{s^{k-1}}{k}\vartheta'\left(\dfrac b s\right) \,d\bar s d\bar b da\\
    &\quad + \mur \int_{a=0}^\infty\int_{b=a}^\infty g(a)a^{N-1} b^{2-N} \,d\bar b da\,.\notag
\end{align}
This expression will be useful for proving the functional inequality in Theorem~\ref{thm:main1}.
Moreover, in order to prove Theorem~\ref{thm:main1}, we seek an inequality of the following type:
\begin{equation*} 
\frac{\vt(s)}{k} \geq \alpha + \beta (1 - s^N)^{k/N}\, . 
\end{equation*}
The constants $\alpha$ and $\beta$ are chosen so that the above inequality is an equality at zero and first order for a convenient choice of $s$ (to be chosen later). This writes into the following lemma:

\begin{lemma} \label{lem:comparison from below}
Assume $N\geq 2$ and $k\in (-N,2-N)$. The following inequality holds true for any $(s,c)\in(0,1)^2$:
\begin{equation} \label{eq:comparison from below}
\frac{\vt(s)}{k}\geq \alpha(c) +\beta(c) \left(1 - s^N\right)^{k/N}\, .
\end{equation}
with the two factors given by
\[\alpha(c):=\frac{\vt(c)}{k} + \frac{1}{k^2} c^{1-N} \left(1 - c^N\right) \vt'(c) \leq 0\, ,\]
and
\[\beta(c):=-\frac{1}{k^2} c^{1-N} \left(1 - c^N\right)^{1 - k/N} \vt'(c) \leq 0\, .\]
\end{lemma}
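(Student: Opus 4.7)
The right-hand side of \eqref{eq:comparison from below} is engineered so that the coefficients $\alpha(c)$ and $\beta(c)$ make it match $\vt(s)/k$ in value \emph{and} first derivative at $s=c$; a short direct differentiation verifies both identities. Their natural geometric meaning emerges through the substitution $y:=(1-s^N)^{k/N}$, which (since $k<0$) is an increasing bijection from $(0,1)$ onto $(1,\infty)$. In this variable the right-hand side becomes the affine function $\alpha(c)+\beta(c)\,y$, and it is tangent to
\[
G(y)\;:=\;\tfrac{1}{k}\,\vt\!\left((1-y^{N/k})^{1/N}\right)
\]
at the point $y_c:=(1-c^N)^{k/N}$. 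Consequently, inequality \eqref{eq:comparison from below} is equivalent to the statement that every tangent of $G$ lies below $G$ on $(1,\infty)$, i.e., that $G$ is convex on $(1,\infty)$.

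The whole content of the lemma is therefore this convexity, and that is where the work lies. The tool is the power-series representation
\[
\vt(s)\;=\;d_N\sum_{n\ge0} a_n\, s^{2n},\qquad a_n\;=\;\frac{(-k/2)_n\,\bigl(1-(k+N)/2\bigr)_n}{(N/2)_n\,n!}.
\]
The hypothesis $k<2-N$ is precisely what makes the second Pochhammer parameter $1-(k+N)/2$ strictly positive, so that \emph{every} coefficient $a_n$ is strictly positive (the parameter $-k/2$ being already positive thanks to $k<0$). This universal positivity of the Taylor coefficients is the pillar of the proof; its sharpness is already visible in Figure~\ref{fig:convexity}, where convexity genuinely fails for $k=-0.5>2-N$.

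From here the plan is threefold. First, differentiate $G$ twice by the chain rule on $s(y)=(1-y^{N/k})^{1/N}$, writing $G''(y)$ as an explicit combination of $\vt(s)$ and $\vt'(s)$ with algebraic prefactors in $y$. Second, substitute the series above (and the termwise-differentiated series for $\vt'$), and expand the remaining $y$-dependent factors by binomial series. Third, swap summations and verify, order by order in the resulting double series, that the coefficient of each monomial is non-negative under the standing hypothesis $k<2-N$. Step three is where the main obstacle lies: the two blocks coming from $\vt$ and $\vt'$ interact delicately and must be regrouped to expose the required cancellations, and the upper bound $k<2-N$ is used twice, once to guarantee positivity of the $a_n$'s and once to control the sign of an auxiliary binomial coefficient arising from $(1-y^{N/k})^{1/N-1}$. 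Once convexity of $G$ on $(1,\infty)$ is established, inequality \eqref{eq:comparison from below} is immediate from the tangent-line interpretation above.
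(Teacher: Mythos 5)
Your reformulation is exactly the paper's framework: choosing $\alpha(c),\beta(c)$ by zeroth- and first-order matching at $s=c$ turns \eqref{eq:comparison from below} into the statement that $\vt/k$ is convex relative to $\varphi(s)=(1-s^N)^{k/N}$, i.e.\ that $g\circ\varphi^{-1}$ is convex, and the series coefficients of $\vt$ are indeed all positive precisely because $k<2-N$ makes the parameter $1-\tfrac{k+N}{2}$ positive. Up to that point you agree with the paper (which works in the variable $z=s^2$ and writes the criterion as $z g''(z)\geq\bigl(\tfrac k2-1+\tfrac{N-k}{2}\tfrac{1}{1-z^{N/2}}\bigr)g'(z)$, equation \eqref{eq:appendix 1}).

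The gap is that the convexity itself — the entire content of the lemma — is only announced, not proved. Your step three ("expand the remaining $y$-dependent factors by binomial series, swap summations and verify order by order that the coefficient of each monomial is non-negative") is not carried out, and as described it does not go through: the second-derivative criterion contains the factor $1/(1-s^N)$, whose expansion produces powers $s^{mN}$ (non-even, and misaligned with the even powers $s^{2n}$ of $\vt$ and $\vt'$ when $N$ is odd), so there is no clean monomial-by-monomial comparison; moreover the relevant comparison is not a positivity statement, because $g'=\vt'/k<0$, so multiplying the lower bound for $1/(1-z^{N/2})$ by $g'$ reverses inequalities and the final termwise condition is of the form $(\bar a+1+n)(\bar b+1+n)\leq(1-\tfrac kN+n)(\bar c+1+n)$ rather than "each coefficient $\geq 0$". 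The paper resolves exactly this difficulty with two ingredients you do not supply: the sharp elementary inequality $\tfrac{N}{1-t^N}\geq\tfrac{2}{1-t^2}+\tfrac{N-2}{2}$ (itself proved via a monotonicity/Young argument), which together with the monotonicity of $g$ reduces the criterion to $(1-z)g''(z)\geq(1-\tfrac kN)g'(z)$, and then the Pochhammer comparison above, where $k<2-N$ enters through the signs of $\bar a+\bar b-\bar c+\tfrac kN$ and $(\bar a+1)(\bar b+1)-(\bar c+1)(1-\tfrac kN)$. Separately, the lemma also asserts $\alpha(c)\leq 0$ and $\beta(c)\leq 0$; $\beta\leq 0$ is immediate from $\vt'\geq 0$, but $\alpha\leq 0$ requires an argument (the paper differentiates $\alpha$, uses the convexity inequality to get $\alpha'\leq 0$, and evaluates the limit $c\to 0^+$), and your proposal does not address either claim.
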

Note that this crucial lemma is the reason we are restricted to the upper bound $2-N$ in $k$, see Figure~\ref{fig:convexity}. The proof is postponed to the Appendix \ref{sec:lem6} due to technicality. 

We are now ready to prove our main result.

\begin{proof}[Proof of Theorem~\ref{thm:main1} for $k< 2-N$]
We begin with the more complicated case $k\in (-N,2-N)$ as the harmonic case $k=2-N$ will follow in a similar manner. We break down the argument into seven steps, and briefly summarize our strategy here: 
\begin{enumerate}
\item Using the radial expression of the energy in Proposition~\ref{prop:sec2} and the characterisation of stationary states in Lemma~\ref{lem:charsstatessubNewtonian}, we derive an expression for the energy $\mF[\bar\rho]$ of the stationary state $\bar\rho$. The key here is to express the interaction part of the energy in terms of the diffusion + confining potential thanks to Lemma~\ref{lem:charsstatessubNewtonian}.
\item In order to compare $\mF[\rho]$ to $\mF[\bar\rho]$ for any radial function $\rho\in\mY_M^*$, we express $\rho$ as the push-forward of $\bar\rho$ by a radial convex function. Using again the radial formulation of the energy in Proposition~\ref{prop:sec2}, we are thus able to derive an expression for $\mF[\rho]$ in terms of the push-forward map and $\bar\rho$.
\item  In Step 3, we use Jensen's inequality and the convexity result in Lemma~\ref{lem:comparison from below} to obtain a lower bound on the interaction term in $\mF[\rho]$ as derived in Step 2.
\item Similar to Step 1 for $\mF[\bar\rho]$, we now use the characterisation of stationary states in Lemma~\ref{lem:charsstatessubNewtonian} to reformulate the lower bound on the interaction term in $\mF[\rho]$ derived in Step 3 in terms of the diffusion + confining potential.
\item In Step 5, firstly, we use convexity estimates for the confining potential term in $\mF[\rho]$ to bound it from below, similar to how we used Jensen's inequality in Step 3 to obtain a lower bound on the interaction term. Secondly, we combine the lower bound on the interaction term derived in Step 4 and the lower bound on the confining potential to obtain an overall lower bound on the energy $\mF[\rho]$. The convexity estimates and the characterisation of the stationary state in Steps 1-5 are applied in such a way as to reveal a nice structure of this lower bound in terms of the choice of parameters $(m,k)$. In particular, it reveals how the lower bound depends on the choice of regime (diffusion-dominated vs fair-competition).
\item The lower bound in Step 5 depends on $(m,k)$, the push foward map, and the stationary state $\bar\rho$. In order to compare with $\mF[\bar\rho]$, we need to remove the dependence on the push forward. This is achieved thanks to another set of estimates depending on the choice of regime. This concludes the proof of the inequality as stated in Theorem~\ref{thm:main1}.
\item In a final step, we investigate the equality cases of the inequality derived in Step 6, and prove the claimed uniqueness result.
\end{enumerate}

\noindent\textbf{Step 1 (Energy of stationary state)}: Following Proposition~\ref{prop:sec2}, the energy of the stationary state $\bar \rho$ is given by
\begin{align*}
 \frac{1}{N\sigma_N}\mF[\bar \rho]
 =& \dfrac{1}{N(m-1)}\int_{a=0}^{\infty} \bar\rho(a)^{m}\, a^{N-1} da 
 +   \frac{1}{Nk} \int_{a = 0}^{\infty} \int_{b=0}^a a^{k} \vartheta\left(\frac{b}{a}\right) \,d\bar b d\bar a
 + \frac{\mur}{2N} \int_{a=0}^\infty a^2\,d\bar a \, .
 \end{align*}
Choosing $g=id$ in \eqref{eq:SSg} and rewriting the domain of integration, we obtain
\begin{align*}
 &\int_{a=0}^{\infty} \bar \rho(a)^m a^{N-1}\, da\\
 &= \int_{a=0}^{\infty}  \int_{ b=a}^{\infty}   \int_{s = 0}^b \left[  b^{k-N} \vartheta\left(\dfrac s b\right) - \frac{b^{k-1-N}}{k} s \vartheta'\left(\dfrac s b\right)  \right] a^{N-1} \,d\bar s d\bar b  da\\  
 &\quad+ \frac{1}{k} \int_{a=0}^{\infty}\int_{ b=a}^{\infty}  \int_{s=b}^{\infty}  b^{1-N}s^{k-1} \vartheta'\left(\dfrac b s\right) a^{N-1} \,d\bar s d\bar b  da
+\mur \int_{a=0}^\infty\int_{b=a}^\infty a^{N-1}b^{2-N} \,d\bar b da\\
&=\int_{b=0}^{\infty}  \int_{ s=0}^{b}   \left(\int_{a = 0}^ba^{N-1}\,da\right) \left[  b^{k-N} \vartheta\left(\dfrac s b\right) - \frac{b^{k-1-N}}{k} s \vartheta'\left(\dfrac s b\right)  \right]  \,d\bar s d\bar b  
\\  &\quad+  \frac{1}{k} \int_{b=0}^{\infty}\int_{s=b}^{\infty}  \left(\int_{a=0}^{b} a^{N-1}\, da\right)  b^{1-N}s^{k-1} \vartheta'\left(\dfrac b s\right)  \,d\bar s d\bar b  
+\mur \int_{b=0}^\infty\left(\int_{a=0}^b a^{N-1}\, da\right)b^{2-N} \,d\bar b \\
&=\frac{1}{N}\left(\int_{b=0}^{\infty}  \int_{ s=0}^{b}  b^k\vartheta\left(\dfrac s b\right)   \,d\bar s d\bar b 
+\mur  \int_{b=0}^\infty b^{2} \,d\bar b \right)
\, .
\end{align*}
Hence,
\begin{align*}
&\frac{1}{Nk} \int_{a = 0}^{\infty} \int_{b=0}^a a^{k} \vartheta\left(\frac{b}{a}\right) \,d\bar b d\bar a
= \frac{1}{k}\int_{a=0}^{\infty} \bar \rho(a)^m a^{N-1}\, da
 -\frac{\mur}{kN} \int_{a=0}^\infty a^2\,d\bar a \,,
\end{align*}
and so we conclude
\begin{align}\label{Fsstatetransport}
 \frac{1}{N\sigma_N}\mF[\bar \rho]
 =&  \int_{a=0}^{\infty}\left(\frac{1}{N(m-1)} + \frac{1}{k}\right) \bar\rho(a)^{m}\, a^{N-1} da 
+\frac{\mur}{N}\left(\frac{1}{2} -\frac{1}{k}\right) \int_{a=0}^\infty a^2\,d\bar a \,.
 \end{align}

\noindent\textbf{Step 2 (Write $\mF[\rho]$ in terms of $\bar\rho$)}:
Next, we write the energy $\mF[\rho]$ in terms of $\bar\rho$ and our goal is to find suitable estimates from below. 
For a given stationary state $\bar \rho \in \mY_M^*$ and any radial function $\rho \in \mY_M^*$, we denote by $\psi$ the radial profile of the convex function whose gradient pushes forward the measure $\bar \rho(a)a^{N-1} da$ onto $\rho(r) r^{N-1}dr$: 
$$\psi' \# \left(\bar\rho(a) a^{N-1} da\right) = \rho(r) r^{N-1} dr\,.$$ 
Changing variables $ r = \psi'(a)$, we have
$$
\rho(r)r^{N-1}\,dr=\bar\rho(a)a^{N-1}\,da =:d \bar a
$$
and 
$$
\rho(r)=\dfrac{a^{N-1} \bar \rho(a)}{\psi'(a)^{N-1}\psi''(a)} 
=\frac{\bar\rho(a)}{\varphi(a)}\,,\qquad \text{for } \quad
\varphi(a):=\dfrac{1}{Na ^{N-1}}   \dfrac d{da } \left( \psi'\right)^N(a)\, .
$$
Then the repulsive term of the functional $\mF[\rho]$ rewrites
\begin{align*}
  \dfrac{\sigma_N}{m-1}\int_{r=0}^{\infty} \rho(r)^{m}\, r^{N-1} dr
  =&\dfrac{\sigma_N}{m-1} \int_{a=0}^{\infty}  \varphi(a)^{1-m} \bar \rho(a )^m  a^{N-1}\, da\, ,
\end{align*}
and following \eqref{eq:Fsubnewtonian}in Proposition~\ref{prop:sec2}, the interaction term becomes
\begin{align*}
 &\sigma_N\int_{r = 0}^{\infty}\int_{s = 0}^r \frac{r^k}{k} \vartheta\left(\dfrac s r\right)  \rho(r) \rho(s)r^{N-1}s^{N-1}\, dsdr  
 =\frac{\sigma_N}{k}\int_{a = 0}^{\infty}\int_{b = 0}^a (\psi'(a))^k \vartheta\left(\dfrac {\psi'(b)}{\psi'(a)}\right) \,d\bar bd\bar a\, .
\end{align*}
We therefore have
\begin{align}\label{Ftransport}
   \frac{1}{N\sigma_N}\mF[\rho]
   &=\dfrac{1}{N(m-1)} \int_{a=0}^{\infty}  \varphi(a)^{1-m} \bar \rho(a )^m  a^{N-1}\, da\\
   &\quad + \frac{1}{Nk}\int_{a = 0}^{\infty}\int_{b = 0}^a (\psi'(a))^k \vartheta\left(\dfrac {\psi'(b)}{\psi'(a)}\right) \,d\bar bd\bar a
   + \frac{\mur}{2N}\int_{a=0}^\infty (\psi'(a))^2\,d\bar a\,.\notag
\end{align}
%

\noindent\textbf{Step 3 (Convexity inequalities for the interaction term)}: In this step, we use Jensen's inequality and the convexity result in Lemma~\ref{lem:comparison from below} to obtain a lower bound on the interaction term of $\mF[\rho]$ as stated in \eqref{Ftransport}.

By Jensen's inequality \eqref{eq:weightedJensen}, we estimate
\begin{align}\label{Jensen1}
    (\psi'(a))^k
    &= a^k\left(\frac{\psi'(a)^N}{a^N}\right)^{k/N}
    =a^k\left(\int_{s=0}^a \varphi(s)\frac{Ns^{N-1}}{a^N}\,ds\right)^{k/N}\\
    &\le N a^{k-N}\int_{s=0}^a \varphi(s)^{k/N}s^{N-1}\,ds\notag\,,
\end{align}
\begin{align}\label{Jensen2}
    \left(\psi'(a)^N-\psi'(b)^N\right)^{k/N}
    &=  \left(\frac{\psi'(a)^N-\psi'(b)^N}{a^N-b^N}\right)^{k/N}(a^N-b^N)^{k/N}\\
    &=(a^N-b^N)^{k/N}\left(\int_{s=b}^a \varphi(s)\frac{Ns^{N-1}}{(a^N-b^N)}\,ds\right)^{k/N}\notag\\
    &\le N (a^N-b^N)^{k/N-1}\int_{s=b}^a \varphi(s)^{k/N}s^{N-1}\,ds\,.\notag
\end{align}
Using first the comparison equation~\eqref{eq:comparison from below} in Lemma~\ref{lem:comparison from below} with $c=b/a$ and then \eqref{Jensen1}--\eqref{Jensen2}, we obtain the estimate
\begin{align*}
&\frac{1}{Nk}\int_{a = 0}^{\infty}\int_{b = 0}^a (\psi'(a))^k\vartheta\left(\dfrac {\psi'(b)}{\psi'(a)}\right) \,d\bar bd\bar a \\
& \geq \frac{1}{N}\int_{a = 0}^{\infty}\int_{b = 0}^a (\psi'(a))^k
\alpha\left(\frac{b}{a}\right) \,d\bar bd\bar a
+ \frac{1}{N}\int_{a = 0}^{\infty}\int_{b = 0}^a \beta\left(\frac{b}{a}\right)
\left( (\psi'(a))^N- (\psi'(b))^N\right)^{k/N}\,d\bar bd\bar a
\\
%
& \geq \int_{a = 0}^{\infty}\int_{b = 0}^a\int_{s=0}^a a^{k-N} \varphi(s)^{k/N} s^{N-1} 
\alpha\left(\frac{b}{a}\right) \,dsd\bar bd\bar a\\
&\quad+ \int_{a = 0}^{\infty}\int_{b = 0}^a \int_{s=b}^a (a^N-b^N)^{k/N-1} \varphi(s)^{k/N} s^{N-1} \beta\left(\frac{b}{a}\right)\,dsd\bar bd\bar a
\\
%
%
&= \frac{1}{k}\int_{a = 0}^{\infty}\int_{b = 0}^a\int_{s=0}^a a^{k-N} \varphi(s)^{k/N} s^{N-1} 
\left( \vartheta\left(\dfrac b a\right) + \dfrac1k \left(\dfrac b a\right)^{1-N} \left(1 - \left(\dfrac b a\right)^N\right)  \vartheta'\left(\dfrac b a\right) \right) 
\,dsd\bar bd\bar a\\
&\quad-\frac{1}{k^2} \int_{a = 0}^{\infty}\int_{b = 0}^a \int_{s=b}^a (a^N-b^N)^{k/N-1} \varphi(s)^{k/N} s^{N-1} 
\left(\dfrac b a\right)^{1-N} \left(1 - \left(\dfrac b a\right)^N\right)^{1 - k/N} \vartheta'\left(\dfrac b a\right) 
\,dsd\bar bd\bar a\\
&=\frac{1}{k} \int_{a = 0}^{\infty}\int_{b = 0}^a\int_{s=0}^a a^{k-N} \varphi(s)^{k/N} s^{N-1}  \vartheta\left(\dfrac b a\right) 
\,dsd\bar bd\bar a\\
&\quad+ \frac{1}{k^2} \int_{a = 0}^{\infty}\int_{b = 0}^a\int_{s=0}^a a^{k-N} \varphi(s)^{k/N} s^{N-1} 
 \left(\dfrac b a\right)^{1-N} \left(1 - \left(\dfrac b a\right)^N\right) \vartheta'\left(\dfrac b a\right)
\,dsd\bar bd\bar a\\
&\quad-\frac{1}{k^2} \int_{a = 0}^{\infty}\int_{b = 0}^a \int_{s=b}^a (a^N-b^N)^{k/N-1} \varphi(s)^{k/N} s^{N-1} 
\left(\dfrac b a\right)^{1-N} \left(1 - \left(\dfrac b a\right)^N\right)^{1 - k/N} \vartheta'\left(\dfrac b a\right) 
\,dsd\bar bd\bar a\,.
\end{align*}
Note that the signs in \eqref{Jensen1}--\eqref{Jensen2} flip since $\alpha\left(\frac{b}{a}\right)$ and $\beta\left(\frac{b}{a}\right)$ are non-positive. Finally, the above simplifies to
\begin{align*}
&\frac{1}{Nk}\int_{a = 0}^{\infty}\int_{b = 0}^a (\psi'(a))^k\vartheta\left(\dfrac {\psi'(b)}{\psi'(a)}\right) \,d\bar bd\bar a \\
&\ge \frac{1}{k} \int_{a = 0}^{\infty}\int_{b = 0}^a\int_{s=0}^a a^{k-N} \varphi(s)^{k/N} s^{N-1}  \vartheta\left(\dfrac b a\right) 
\,dsd\bar bd\bar a\\
&\quad+ \frac{1}{k^2} \int_{a = 0}^{\infty}\int_{b = 0}^a\int_{s=0}^a  \varphi(s)^{k/N} s^{N-1} a^{k-N-1}b^{1-N} \left(a^N - b^N\right) \vartheta'\left(\dfrac b a\right)
\,dsd\bar bd\bar a\\
&\quad-\frac{1}{k^2} \int_{a = 0}^{\infty}\int_{b = 0}^a \int_{s=b}^a \varphi(s)^{k/N} s^{N-1} b^{1-N} a^{k-1} \vartheta'\left(\dfrac b a\right) 
\,dsd\bar bd\bar a\\
&=\frac{1}{k} \int_{a = 0}^{\infty}\int_{b = 0}^a\int_{s=0}^a a^{k-N} \varphi(s)^{k/N} s^{N-1}  \vartheta\left(\dfrac b a\right) 
\,dsd\bar bd\bar a\\
&\quad+ \frac{1}{k^2} \int_{a = 0}^{\infty}\int_{b = 0}^a\int_{s=0}^a  \varphi(s)^{k/N} s^{N-1} a^{k-1}b^{1-N}   \vartheta'\left(\dfrac b a\right)
\,dsd\bar bd\bar a\\
&\quad-\frac{1}{k^2} \int_{a = 0}^{\infty}\int_{b = 0}^a\int_{s=0}^a  \varphi(s)^{k/N} s^{N-1} a^{k-N-1}b \vartheta'\left(\dfrac b a\right)
\,dsd\bar bd\bar a\\
&\quad-\frac{1}{k^2} \int_{a = 0}^{\infty}\int_{b = 0}^a \int_{s=b}^a \varphi(s)^{k/N} s^{N-1} a^{k-1} b^{1-N}  \vartheta'\left(\dfrac b a\right) 
\,dsd\bar bd\bar a\\
&=\frac{1}{k} \int_{a = 0}^{\infty}\int_{b = 0}^a\int_{s=0}^a a^{k-N} \varphi(s)^{k/N} s^{N-1}  \vartheta\left(\dfrac b a\right) 
\,dsd\bar bd\bar a%
\end{align*}
\begin{align*}
&\quad+ \frac{1}{k^2} \int_{a = 0}^{\infty}\int_{b = 0}^a\int_{s=0}^b  \varphi(s)^{k/N} s^{N-1} a^{k-1}b^{1-N}   \vartheta'\left(\dfrac b a\right)
\,dsd\bar bd\bar a\\
&\quad-\frac{1}{k^2} \int_{a = 0}^{\infty}\int_{b = 0}^a\int_{s=0}^a  \varphi(s)^{k/N} s^{N-1} a^{k-N-1}b \vartheta'\left(\dfrac b a\right)
\,dsd\bar bd\bar a\\
&=\frac{1}{k} \int_{a = 0}^{\infty}\int_{b = 0}^a\int_{s=0}^a  \varphi(s)^{k/N} s^{N-1}  \left[a^{k-N}\vartheta\left(\dfrac b a\right) 
-  \frac{a^{k-N-1}}{k}b \vartheta'\left(\dfrac b a\right)\right]
\,dsd\bar bd\bar a\\
&\quad+ \frac{1}{k^2} \int_{a = 0}^{\infty}\int_{b = 0}^a\int_{s=0}^b  \varphi(s)^{k/N} s^{N-1} a^{k-1}b^{1-N}   \vartheta'\left(\dfrac b a\right)
\,dsd\bar bd\bar a\,.
\end{align*}

\noindent\textbf{Step 4 (Characterisation of stationary states)}:
Next, we make use of the characterisation \eqref{eq:SSg} of stationary states for $g(a)=\varphi(a)^{k/N}$ to be able to rewrite the lower bound on the interaction term obtained in Step 3 in such a way that it has a similar structure as the diffusion term and the second moment term in $\mF[\rho]$ as stated at the end of Step 2.

Writing $\int_{a=0}^\infty\int_{s=a}^\infty=\int_{s=0}^\infty\int_{a=0}^s$ and exchanging $a$ and $s$, the expression \eqref{eq:SSg} for stationary states with the choice $g(a)=\varphi(a)^{k/N}$ can be written as
\begin{align*}
   & \int_{a=0}^\infty \varphi(a)^{1-m_c}\bar\rho(a)^m a^{N-1}\,da\\
&\quad=\int_{a=0}^\infty\int_{s=a}^\infty \int_{b=0}^s \varphi(a)^{k/N}a^{N-1} \left[s^{k-N}\vartheta\left(\dfrac b s\right) 
 -\frac{s^{k-N-1}}{k}b \vartheta'\left(\dfrac b s\right)\right]
\,dad\bar bd\bar s\\
&\qquad+ \int_{a = 0}^{\infty}\int_{s=a}^\infty\int_{b=s}^\infty  \varphi(a)^{k/N} a^{N-1} \frac{b^{k-1}}{k}s^{1-N}   \vartheta'\left(\dfrac s b\right)
\,dad\bar bd\bar s
+ \mur \int_{a=0}^\infty\int_{s=a}^\infty \varphi(a)^{k/N} a^{N-1}s^{2-N}\,da d\bar s
%
\end{align*}
\begin{align*}
&\quad=\int_{a=0}^\infty\int_{s=0}^a \int_{b=0}^a \varphi(s)^{k/N}s^{N-1} \left[a^{k-N}\vartheta\left(\dfrac b a\right) 
 -\frac{a^{k-N-1}}{k}b \vartheta'\left(\dfrac b a\right)\right]
\,dsd\bar bd\bar a\\
&\qquad+ \int_{a = 0}^{\infty}\int_{s=0}^a \int_{b=a}^\infty  \varphi(s)^{k/N} s^{N-1} \frac{b^{k-1}}{k}a^{1-N}   \vartheta'\left(\dfrac a b\right)
\,dsd\bar bd\bar a
+ \mur \int_{a=0}^\infty\int_{s=0}^a \varphi(s)^{k/N} s^{N-1}a^{2-N}\,ds d\bar a\,.
\end{align*}
Writing $\int_{a=0}^\infty\int_{b=a}^\infty=\int_{b=0}^\infty\int_{a=0}^b$ in the second term only and exchanging $a$ and $b$, we conclude
\begin{align*}
&\int_{a=0}^\infty\int_{s=0}^a \int_{b=0}^a \varphi(s)^{k/N}s^{N-1} \left[a^{k-N}\vartheta\left(\dfrac b a\right) 
 -\frac{a^{k-N-1}}{k}b \vartheta'\left(\dfrac b a\right)\right]
\,dsd\bar bd\bar a\\
&\qquad+ \int_{a = 0}^{\infty}\int_{s=0}^a \int_{b=0}^a  \varphi(s)^{k/N} s^{N-1} \frac{a^{k-1}}{k}b^{1-N}   \vartheta'\left(\dfrac b a\right)
\,dsd\bar bd\bar a\\
&=\int_{a=0}^\infty \varphi(a)^{1-m_c}\bar\rho(a)^m a^{N-1}\,da
- \mur \int_{a=0}^\infty\int_{s=0}^a \varphi(s)^{k/N} s^{N-1}a^{2-N}\,ds d\bar a\,.
\end{align*}
This expression allows to rewrite the lower bound on the interaction term obtained in Step 3. More precisely, we obtain the following lower bound on the interaction term:
\begin{align*} 
&\frac{1}{Nk}\int_{a = 0}^{\infty}\int_{b = 0}^a (\psi'(a))^k\vartheta\left(\dfrac {\psi'(b)}{\psi'(a)}\right) \,d\bar bd\bar a \\
&\quad\ge\frac{1}{k}\int_{a=0}^\infty \varphi(a)^{1-m_c}\bar\rho(a)^m a^{N-1}\,da
 - \frac{\mur}{k} \int_{a=0}^\infty\int_{s=0}^a \varphi(s)^{k/N} s^{N-1}a^{2-N}\,ds d\bar a\,.\notag
\end{align*}

\noindent\textbf{Step 5 (Lower Bound on $\mF[\rho]$)}: In this step, we first apply convexity estimates on the second moment term in \eqref{Ftransport}, and then combine these bounds with the bound we obtained on the interaction term of the energy in Step 4. This provides a new lower bound on the energy $\mF[\rho]$ revealing a nice structure that depends on the choice of parameter regime $(m,k)$.

We estimate the confinement term in the energy \eqref{Ftransport} (Step 2) using Jensen's inequality \eqref{eq:weightedJensen} as in \eqref{Jensen1} (Step 3):
\begin{align}
      (\psi'(a))^2
    &= a^2\left(\frac{\psi'(a)^N}{a^N}\right)^{2/N}
    =a^2\left(\int_{s=0}^a \varphi(s)\frac{Ns^{N-1}}{a^N}\,ds\right)^{2/N}\notag\\
    &\ge N a^{2-N}\int_{s=0}^a \varphi(s)^{2/N}s^{N-1}\,ds\,.
    \label{confinementJensen}
\end{align}
Substituting these estimates into \eqref{Ftransport}, we obtain
\begin{align*}
   \frac{1}{N\sigma_N}\mF[\rho]
   &\ge \frac1N\int_{a=0}^{\infty} \left(\frac{\varphi(a)^{1-m}}{m-1}  -\frac{\varphi(a)^{1-m_c}}{m_c-1}\right) \bar \rho(a )^m  a^{N-1}\, da\\
   &\quad + \mur\int_{a=0}^\infty\int_{s=0}^a \left(\frac{\varphi(s)^{2/N}}{2}-\frac{\varphi(s)^{k/N}}{k}\right) s^{N-1}a^{2-N}\,ds d\bar a\,.
\end{align*}

\noindent\textbf{Step 6 (Convexity by choice of regimes)}:
Finally, we make use of the inequalities
\begin{align}
\frac{z^{1-m}}{m-1}  -\frac{z^{1-m_c}}{m_c-1}
&\ge \frac{1}{m-1}  -\frac{1}{m_c-1}\qquad z\ge 0\,, \quad m\ge m_c\,,
\label{zbound1}\\
\frac{z^{2/N}}{2}  -\frac{z^{k/N}}{k}
&\ge \frac{1}{2}  -\frac{1}{k}      \hspace{2.5cm}z\ge 0\,, \quad k<0\,.
\label{zbound2}
\end{align}
to bound $\mF[\rho]$ from below once more, following the estimate in Step 5.
From the formulation \eqref{Fsstatetransport} in Step 1, we conclude
$$
\frac{1}{N\sigma_N}\mF[\rho] \ge \frac{1}{N\sigma_N}\mF[\bar\rho]\,.
$$
\noindent\textbf{Step 7 (Equality cases)}:
Equality in Jensen's inequality \eqref{eq:weightedJensen} arises if and only if the derivative of the transport map, $\psi''$, is a constant function, i.e. when $\rho$ is a dilation of $\bar \rho$. In agreement with this, equality in \eqref{zbound1}--\eqref{zbound2} is realised if and only if $z=1$, that is, $\rho = \bar \rho$. We conclude that equality in the functional inequality in Theorem~\ref{thm:main1} is realised if and only if $\rho = \bar \rho$, unless $m=m_c$ and $\mur=0$, in which case the equality cases correspond to dilations of $\bar\rho$.
\end{proof}

The proof of Theorem~\ref{thm:main1} for the harmonic case $k=2-N$ is similar in strategy to the sub-harmonic case, but simpler in terms of calculations. This is why we are not breaking the argument down into steps this time.
\begin{proof}[Proof of Theorem~\ref{thm:main1} for $k=2-N$]
Similar to Step 1 above, the interaction energy of the stationary state can be rewritten as follows the characterisation~\eqref{eq:StStcharacterization radial} provided in Lemma~\ref{lem:charsstatessubNewtonian}:
\begin{align*}
 &\frac{\sigma_N}{2-N} \int_{a = 0}^{\infty} \bar\rho(a) M_{\bar\rho}(a) a\,  da\\
 &\quad= \frac{N \sigma_N}{2-N} \int_{a = 0}^{\infty}\left(\int_{b = 0}^{a} b^{N-1} \,db\right)a^{1-N} \bar\rho(a) M_{\bar\rho}(a) \,  da\\
 &\quad=\frac{N \sigma_N}{2-N} \int_{b = 0}^{\infty}\left(\int_{a = b}^{\infty} a^{1-N} \bar\rho(a) M_{\bar\rho}(a) \,  da \right)b^{N-1} \,db\\
&\quad=\frac{N \sigma_N}{2-N} \int_{b = 0}^{\infty} \bar \rho(b)^m b^{N-1} \,db
 -\mur \frac{N \sigma_N}{2-N} \int_{b = 0}^{\infty}\left(\int_{a = b}^{\infty} a^{2-N}  \,  d\bar a \right)b^{N-1} \,db\\
&\quad= \frac{N \sigma_N}{2-N} \int_{a = 0}^{\infty} \bar \rho(a)^m a^{N-1} \,da
 -\mur \frac{N \sigma_N}{2-N} \int_{a = 0}^{\infty}\left(\int_{b = 0}^{a}b^{N-1} \,db \right) a^{2-N}  \,  d\bar a\\
  &\quad=\frac{N \sigma_N}{2-N} \int_{a = 0}^{\infty} \bar \rho(a)^m a^{N-1} \,da
 -\mur \frac{ \sigma_N}{2-N} \int_{a = 0}^{\infty} a^{2}  \,  d\bar a\,.
 \end{align*}
Substituting into the expression for $\mF$ derived in Proposition~\ref{prop:sec2}, we obtain the following expression for the free energy of the stationary states:
\begin{align*}
    \frac{1}{N\sigma_N}\mF[\bar\rho]
    = \left(\frac{1}{N(m-1)}+\frac{1}{2-N}\right)\int_{a=0}^\infty\bar\rho(a)^m a^{N-1}\,da
    + \frac{\mur}{N}\left(\frac{1}{2}-\frac{1}{2-N}\right)\int_{a=0}^\infty a^2 \,d\bar a\,.
\end{align*}
Next, we estimate the interaction term of the free energy for $\rho$ using \eqref{Jensen1} for $k=2-N$,
\begin{align*}
\int_{r = 0}^{\infty} \rho(r) M_\rho(r) r\,  dr
=& \int_{a = 0}^{\infty}    M_{\bar \rho}(a)   \left(  \psi'(a) \right)^{2-N}  \,d\bar a\\
\leq & N\int_{a = 0}^{\infty}\int_{b = 0}^a       M_{\bar \rho}(a)  \varphi(b)^{(2-N)/N} b^{N-1} a^{2-2N} \,   db d\bar a \\
=& N \int_{b = 0}^{\infty}    \varphi(b)^{(2-N)/N}    \left\{  \int_{a = b}^{\infty}     M_{\bar \rho}(a) a^{2-2N}  \, d\bar a\right\} b^{N-1}\, db \,.
\end{align*}
Therefore, we have
\begin{align*}
\int_{r = 0}^{\infty} \rho(r) M_\rho(r) r\,  dr
=& N \int_{b = 0}^{\infty}  \varphi(b)^{(2-N)/N} \bar \rho(b)^m b^{N-1}\, db \\
&\qquad
- N\mur \int_{b = 0}^{\infty}    \varphi(b)^{(2-N)/N}    \left\{  \int_{a = b}^{\infty} a^{2-N}  \, d\bar a\right\} b^{N-1}\, db 
\end{align*}
since $M_\rho(r)=M_{\bar \rho}(a)$ and where we used the characterisation~\eqref{eq:StStcharacterization radial} of stationary states provided in Lemma \ref{lem:charsstatessubNewtonian}. Estimating the confinement term as in \eqref{confinementJensen}, we obtain the following estimate on the free energy as given in Proposition~\ref{prop:sec2}:
\begin{align*}
 \frac{1}{N\sigma_N}\mF[\rho] \geq &
 \int_{a=0}^{\infty} \left(\frac{\varphi(a)^{1-m}}{N(m-1)} + \frac{\varphi(a)^{(2-N)/N}}{2-N} \right) \bar \rho(a )^m  a^{N-1}\, da\\
 &+\mur \int_{a=0}^\infty\int_{b=0}^a \left(\frac{\varphi(b)^{2/N}}{2}-\frac{\varphi(b)^{(2-N)/N}}{2-N}\right) a^{2-N}b^{N-1}\, db d\bar a\,.
\end{align*}
We conclude as before using \eqref{zbound1}--\eqref{zbound2}.
\end{proof}

Theorem \ref{thm:main1} directly implies Theorem~\ref{thm:main2}:

\begin{proof}[Proof of Theorem~\ref{thm:main2}]
  Assume there are two radial stationary states to equation \eqref{eq:KSrescradial} with the same mass $M$: $\bar \rho_1, \bar \rho_2 \in \mY_M^*$. Then Theorem \ref{thm:main1} implies that $\mF[\bar\rho_1]=\mF[\bar \rho_2]$, and so $\bar \rho_1=\bar\rho_2$ (up to dilations if $m=m_c$, $\mur=0$ and $M=M_c$).
\end{proof}

\appendix
\section{Properties of Hypergeometric Functions}\label{sec:hypergeo}
In this work, we are making frequent use of the fact that the Riesz potential of a radial function can be expressend in terms of the Gauss Hypergeometric Function,
\begin{equation}\label{hyposeries}
F(a,b;c;z) := \sum_{n = 0}^{\infty} \dfrac{(a)_n(b)_n}{(c)_n} \dfrac{z^n}{n!}
\end{equation}
which we define for $z \in (-1,1)$, with parameters $a,b,c$ being positive. Here $(q)_n$ denotes the Pochhammer symbol,
\begin{equation*}
(q)_n :=
\begin{cases}
q(q+1)\cdots(q+n-1)\,, &\text{if}\, n>0\,,\\
1\,, & \text{if}\, n=0\,.
\end{cases}
\end{equation*}
In our context, the following analytical continuation allows to establish the link with the Riesz potential,
\begin{equation*}
F(a,b;c;z):=\frac{\Gamma(c)}{\Gamma(b)\Gamma(c-b)}\int_{0}^1(1-zt)^{-a}(1-t)^{c-b-1}t^{b-1}\,dt,
\end{equation*}
Notice that $F(a,b,c,0)=1$ and $F$ is increasing with respect to $z\in(-1,1)$.
Moreover, if $c>1$, $b>1$ and $c>a+b$, the limit as $z\uparrow 1$ is finite and it takes the value
\begin{equation*}\label{hyperlimit}
\frac{\Gamma(c)\Gamma(c-a-b)}{\Gamma(c-a)\Gamma(c-b)},
\end{equation*}
see \cite[\S 9.3]{Leb}.
To simplify notation, let us define
\begin{equation}\label{defH}
 H(a,b;c;z):= \frac{\Gamma(b)\Gamma(c-b)}{\Gamma(c)} F(a,b;c;z)
 = \int_{0}^1(1-zt)^{-a}(1-t)^{c-b-1}t^{b-1}\,dt\, .
\end{equation}
 We will also make use of some elementary relations. Firstly, the derivative of $F$ in $z$ is given by \cite[15.2.1]{Abramowitz}
 \begin{equation}\label{hyperdiff}
  \frac{d}{dz} F(a,b;c;z)=\frac{ab}{c}F(a+1,b+1,c+1,z)\, .
 \end{equation}
Further, the following quadratic transformation holds true for hypergemetric functions \cite[Formula 15.3.17]{Abramowitz}:
\begin{equation}\label{quadratictrans}
F(a,b;2b;z)=\left(\frac{1}{2}+\frac{1}{2}\sqrt{1-z}\right)^{-2a} F\left(a,a-b+\frac{1}{2};b+\frac{1}{2};\left(\frac{1-\sqrt{1-z}}{1+\sqrt{1-z}}\right)^2\right)\, .
\end{equation}
Finally, we will make use of the following two identities \cite[15.2.18 and 15.2.17]{Abramowitz},
\begin{equation}\label{hypershift1}
 (c-a-b)F(a,b;c;z)-(c-a)F(a-1,b;c;z)+b(1-z)F(a,b+1;c;z)=0\, ,
\end{equation}
and
\begin{equation}\label{hypershift2}
 (c-a-1)F(a,b;c;z)+aF(a+1,b;c;z)-(c-1)F(a,b;c-1;z)=0\, .
\end{equation}

\section{Proof of Lemma \ref{lem:comparison from below}}\label{sec:lem6}


In this appendix we give a complete proof of Lemma~\ref{lem:comparison from below}. The case $N=2$ will be treated separately, and we present here two different proofs. The first one is in the same spirit as for higher dimensions and uses the integral representation \eqref{defH} to motivate inequality \eqref{eq:comparison from below}, a simple consequence of Jensen's inequality. The second proof is much shorter and follows from a simple convexity argument, however, it cannot be generalised to higher dimensions up to our knowledge.

We begin by recalling the statement of Lemma~\ref{lem:comparison from below}:
\begin{lemma*} 
Assume $N\geq 2$ and $k\in (-N,2-N)$. The following inequality holds true for any $(t,c)\in(0,1)^2$:
\begin{equation} \label{eq:comparison from below}
\frac{\vt(t)}{k}\geq \alpha(c) +\beta(c) \left(1 - t^N\right)^{k/N}\, .
\end{equation}
with the two factors given by
\[\alpha(c):=\frac{\vt(c)}{k} + \frac{1}{k^2} c^{1-N} \left(1 - c^N\right) \vt'(c) \leq 0\, ,\]
and
\[\beta(c):=-\frac{1}{k^2} c^{1-N} \left(1 - c^N\right)^{1 - k/N} \vt'(c) \leq 0\, .\]
\end{lemma*}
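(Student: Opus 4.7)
The plan is to reinterpret \eqref{eq:comparison from below} as a convexity statement. The right-hand side is affine in the variable $u := (1 - t^N)^{k/N}$, and a direct check confirms that at $t = c$ both the value and the first derivative in $t$ of the two sides agree. Hence the RHS is precisely the tangent line, in the $u$ variable, of $\phi(u) := \vt(t)/k$ at $u = (1 - c^N)^{k/N}$, and \eqref{eq:comparison from below} is equivalent to saying that $\phi$ is convex on its image (an interval contained in $(1, \infty)$, since $k/N < 0$). The lemma then follows from the standard tangent-line characterisation of convex functions, applied to each admissible base point $c$.

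The second step is to translate convexity of $\phi$ into a statement about $\vt$. Since $u'(t) = -k\,t^{N-1}(1 - t^N)^{k/N - 1} > 0$ for $k<0$, the chain rule gives $\phi'(u) = -H(t)/k^2$ with
\[ H(t) := \vt'(t)\,t^{1 - N}(1 - t^N)^{1 - k/N}\,. \]
Differentiating once more and tracking signs, $\phi''(u)$ and $-H'(t)$ have the same sign. Thus the lemma reduces to showing that $H$ is non-increasing on $(0, 1)$. Note that $H(t)\ge 0$ is automatic: from the series for $\vt$ one has $(-k/2)_n, (1-(k+N)/2)_n, (N/2)_n > 0$ in the range $-N < k < 2-N$, so $\vt'(t) \ge 0$ on $(0,1)$.

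To attack the monotonicity of $H$, I would work directly with the series representation. By \eqref{hyperdiff},
\[ \vt'(t) = \frac{2 d_N\, pq}{r}\,t\,F\bigl(p+1, q+1; r+1; t^2\bigr),\qquad p = -\tfrac{k}{2},\ q = 1 - \tfrac{k+N}{2},\ r = \tfrac{N}{2}, \]
so $H(t) = \tfrac{2 d_N\, pq}{r}\,t^{2 - N}(1 - t^N)^{1 - k/N}\,F(p+1, q+1; r+1; t^2)$. Multiplying out the generalised binomial series for $(1 - t^N)^{1 - k/N}$ with the hypergeometric series for $F$ yields an explicit power-series representation of $H$ in powers of $t$. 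Differentiating termwise reduces the monotonicity claim to a sign condition on a double series. I would use the contiguous relations \eqref{hypershift1}--\eqref{hypershift2} to re-index and absorb the mismatched Pochhammer ratios, and the assumption $k < 2 - N$ enters here by guaranteeing $q > 0$, which keeps all Pochhammer symbols of definite positive sign.

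The hardest step is the final termwise sign analysis: the series arising from $-H'(t)$ does not factorise cleanly, and extracting a definite sign requires a careful re-indexing and repeated use of the contiguous relations. This is also where the upper bound $k = 2 - N$ must enter in an essential way, in agreement with the numerical evidence of Figure \ref{fig:convexity} showing that the convexity breaks down beyond this threshold. In the special case $N = 2$, $\vt$ collapses to $d_2\,F(-k/2, -k/2; 1; t^2)$, and I would expect a direct convexity argument in the variable $t^2$ to succeed, which I would treat separately both as a sanity check and as motivation for the hypergeometric identities used in the general $N \geq 3$ case.
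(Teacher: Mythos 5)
Your first two steps are correct and coincide with the paper's own reduction: rewriting the inequality as the statement that $\vt(t)/k$ is convex relative to $\varphi(t)=(1-t^N)^{k/N}$ (tangent-line characterisation, with the zeroth- and first-order matching at $t=c$ you describe), and observing that this is equivalent to $H(t)=\vt'(t)\,t^{1-N}(1-t^N)^{1-k/N}$ being non-increasing, is exactly the paper's ``relative convexity'' criterion $g''\geq(\varphi''/\varphi')\,g'$. The genuine gap is that you never prove this monotonicity, and that is the entire content of the lemma. Your plan --- expand $(1-t^N)^{1-k/N}$ as a binomial series, multiply by the hypergeometric series, differentiate termwise and check signs with the contiguous relations --- is left unexecuted, and you concede yourself that the resulting double series ``does not factorise cleanly''. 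There is real reason to doubt it goes through in that form: for $-N<k<0$ the exponent $1-k/N$ lies in $(1,2)$, so the binomial coefficients of $(1-t^N)^{1-k/N}$ have mixed signs, and its Cauchy product with the (positive-coefficient) hypergeometric series has coefficients of no evident sign; the contiguous relations alone do not obviously repair this. The paper's proof never confronts that product: writing the convexity criterion in the variable $z=t^2$ for $g(z)=\tfrac{d_N}{k}F\left(-\tfrac k2,1-\tfrac{k+N}2;\tfrac N2;z\right)$, it first inserts the elementary sharp inequality $N/(1-t^N)\geq 2/(1-t^2)+(N-2)/2$ (proved via Young's inequality), which replaces the awkward factor $(1-z^{N/2})^{-1}$ by rational expressions in $z$; then, using $z\leq 1$ and the sign $g'(z)<0$ --- this is where $k<2-N$ enters essentially, through $k+N-2<0$, not merely through positivity of the Pochhammer parameters --- the whole problem collapses to the single inequality $(1-z)g''(z)\geq(1-k/N)\,g'(z)$, which can be verified coefficient by coefficient because it reduces to a sign condition linear in the summation index. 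Without this intermediate idea, or some substitute for it, your argument stops exactly where the real work begins.

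A smaller omission: the lemma also asserts $\alpha(c)\leq 0$ and $\beta(c)\leq 0$, and these signs are used later (they flip the direction of the Jensen estimates in Step 3 of the proof of Theorem~\ref{thm:main1}). You obtain $\beta(c)\leq 0$ from $\vt'\geq 0$, but you never address $\alpha(c)\leq 0$; in the paper it is deduced from the convexity just established, which gives $\alpha'(c)\leq 0$, combined with the limit $\alpha(c)\to\vt(0)/k<0$ as $c\to 0^+$.
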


\subsection{Two-Dimensional Setting}

\begin{proof}[Proof 1 of Lemma \ref{lem:comparison from below} in dimension $N=2$]
We have $d_2=2\pi$, and so
\begin{align}\label{hyper1}
\vartheta_{k,2}(t)  &= 2\pi F\left( -\dfrac k2,-\dfrac{k}2, 1,t^2  \right)
=\Gamma_k H\left( -\dfrac k2,-\dfrac{k}2, 1,t^2  \right)\\
& = \Gamma_k  \int_{u = 0}^1 u^{-k/2 - 1}(1-u)^{k/2} \left( 1 - t^2 u \right)^{k/2}\, du\, ,\notag
\end{align}
where
\begin{align*} \Gamma_k := \frac{2\pi}{\Gamma(-k/2)\Gamma(1+k/2)}\, .
\end{align*}

We aim at decoupling $t$ and $u$, which is a crucial step in the forthcoming estimates. By convexity of $(\cdot)^{k/2}$,
\begin{align*}
\left[ 1 - u + \left(1 - t^2\right)u  \right]^{k/2} & = \left[ \alpha \dfrac{1-u}\alpha + (1-\alpha) \dfrac{\left(1 - t^2\right)u}{(1-\alpha)}  \right]^{k/2}\\
& \leq  \alpha  \left[\dfrac{1-u}\alpha\right]^{k/2} +  (1-\alpha)   \left[  \dfrac{\left(1 - t^2\right)u}{(1-\alpha)}   \right]^{k/2}\, ,
\end{align*}
where the coefficient $\alpha$ is chosen such that equality arises for $t = c$:
\begin{align*}
\dfrac{1 - u}\alpha& = \dfrac{\left(1 - c^2\right)u}{(1-\alpha)}\, , \quad
\alpha = \dfrac{1 - u}{  1 - c^2 u   } \, , \quad
1 - \alpha = \dfrac{\left(1 - c^2\right)u}{   1 - c^2 u   }\, .
\end{align*}
Therefore we have:
\begin{align*}
&\int_{u = 0}^1 u^{-k/2 - 1}(1-u)^{k/2} \left( 1 - t^2 u \right)^{k/2}\, du\\
 & \leq \int_{u = 0}^1 u^{-k/2 - 1}(1-u)^{k/2} \left\{ (1 - u) (1 - c^2u)^{k/2 - 1} + (1 - c^2)^{1 - k/2}u (1 - c^2u)^{k/2 - 1 }  (1 - t^2)^{k/2} \right\}\, du \\
& \leq\int_{u = 0}^1 u^{-k/2 - 1}(1-u)^{k/2 + 1}  (1 - c^2u)^{k/2 - 1} \, du \\
& \qquad + (1 - c^2)^{1 - k/2} (1 - t^2)^{k/2} \int_{u = 0}^1 u^{-k/2 }(1-u)^{k/2 }  (1 - c^2u)^{k/2 - 1} \, du \\
&= H\left( -\dfrac k2+1,-\dfrac{k}2, 2,c^2  \right) + (1 - c^2)^{1 - k/2} (1 - t^2)^{k/2} H\left( -\dfrac k2+1,-\dfrac{k}2+1, 2,c^2  \right)\, .
\end{align*}
To rewrite $H$ in terms of the hypergeometric function $F$, recall that $\Gamma(z+1)/\Gamma(z)=z$ for any $z\in \C$ that is not an integer less or equal to zero, and so we have 
$\Gamma\left(-\frac{k}{2}+1\right)/\Gamma\left(-\frac{k}{2}\right)=-\frac{k}{2}$. Additionally, since $k\in(-2,0)$, we have $k \neq -2n$, $n \in \N_{>0}$, and therefore
$$
\frac{\Gamma\left(2+\frac{k}{2}\right)}{\Gamma\left(1+\frac{k}{2}\right)} = 1+\frac{k}{2}\, .
$$
It follows that 
\begin{align}\label{hyper2}
\vartheta_{k,2}(t)& \leq  \Gamma_k H\left( -\dfrac k2+1,-\dfrac{k}2, 2,c^2  \right)  
 + (1 - c^2)^{1 - k/2} (1 - t^2)^{k/2} \Gamma_k H\left( -\dfrac k2+1,-\dfrac{k}2+1, 2,c^2  \right)\notag\\
& = 2 \pi \left(1 + \dfrac k2\right)  F\left( -\dfrac k2+1,-\dfrac{k}2, 2,c^2  \right) \notag\\
&\quad - \pi k (1 - c^2)^{1 - k/2} (1 - t^2)^{k/2}  F\left( -\dfrac k2+1,-\dfrac{k}2+1, 2,c^2  \right)
\, .
\end{align}
We have on the one hand from \eqref{hyperdiff}, 
\begin{equation}\label{eq:comparison below n2 A}
\vartheta_{k,2}'(c) =  \pi c k^2 F\left(-\dfrac k 2+1,-\dfrac k 2+1,2 ,c^2 \right)\, .
\end{equation}
On the other hand, we deduce from \eqref{hypershift1} and \eqref{hypershift2}, 
\begin{align*} (1 + k) F\left(-\dfrac k 2+1,-\dfrac k 2,2 ,c^2 \right) & =   \dfrac k2 (1 - c^2) F\left(-\dfrac k 2+1,-\dfrac k 2+1,2 ,c^2 \right)\\
&\quad+ \left(1+\dfrac k2\right) F\left(-\dfrac k 2,-\dfrac k 2,2 ,c^2 \right) \,, \\
\left(1+\dfrac k2\right) F\left(-\dfrac k 2,-\dfrac k 2,2 ,c^2 \right)  &= \dfrac k2 F \left(-\dfrac k 2+1,-\dfrac k 2,2 ,c^2 \right) +  F\left(-\dfrac k 2,-\dfrac k 2,1 ,c^2 \right)\, , 
\end{align*}
that
\[
\left( 1+ \dfrac k2\right) F\left(-\dfrac k 2+1,-\dfrac k 2,2 ,c^2 \right) = 
\dfrac k2 (1 - c^2) F\left(-\dfrac k 2+1,-\dfrac k 2+1,2 ,c^2 \right) + F\left(-\dfrac k 2,-\dfrac k 2,1 ,c^2 \right)\, .
\]
Combining the above and \eqref{hyper1}, the last identity rewrites as
\begin{equation}\label{eq:comparison below n2 B}
\left( 1+ \dfrac k2\right) F\left(-\dfrac k 2+1,-\dfrac k 2,2 ,c^2 \right) = \frac{ (1 - c^2)}{2\pi c k} \vartheta_{k,2}'(c) + \frac{1}{2\pi} \vartheta_{k,2}(c)\, .
\end{equation}
The two relations \eqref{eq:comparison below n2 A} and \eqref{eq:comparison below n2 B} applied to \eqref{hyper2} complete the proof of \eqref{eq:comparison from below} in dimension $N=2$.
\end{proof}


Ultimately, we would like to use Lemma~\ref{lem:comparison from below} to prove our main result Theorem~\ref{thm:main1}. In this context, we will apply the convexity estimate in a particular setting: 
For a given stationary state $\bar \rho \in \mY_M^*$ and any radial function $\rho \in \mY_M^*$, we denote by $\psi$ the radial profile of the convex function whose gradient pushes forward the measure $\bar \rho(a)a^{N-1} da$ onto $\rho(r) r^{N-1}dr$: 
$$\psi' \# \left(\bar\rho(a) a^{N-1} da\right) = \rho(r) r^{N-1} dr\,.$$ 
In this special case where $N=2$, $c=b/a$ and $t=\psi'(b)/\psi'(a)$ for $b<a$, Lemma~\ref{lem:comparison from below} can be shown by a simple convexity argument:\\

\begin{proof}[Proof 2 of Lemma~\ref{lem:comparison from below} in dimension $N=2$]
Let $u\in (0,1)$.
Since $(\cdot)^{k/2}$ is convex, we have directly from the definition of a convex function
\begin{align}\label{2Dconvexity}
\left(\psi'(a)^2-\psi'(b)^2u\right)^{k/2}
= &\left((1-u)a^2 \frac{\psi'(a)^2}{a^2}+u(a^2-b^2)\left(\frac{\psi'(a)^2-\psi'(b)^2}{a^2-b^2}\right)\right)^{k/2}\notag\\
\leq &(1-u)a^2 (a^2-b^2u)^{k/2-1}\left(\frac{\psi'(a)^2}{a^2}\right)^{k/2}\notag\\
&+ (a^2-b^2)u(a^2-b^2u)^{k/2-1}\left(\frac{\psi'(a)^2-\psi'(b)^2}{a^2-b^2}\right)^{k/2}
\end{align}
Writing $\vartheta_{k,2}(t)$ for $t=\psi'(b)/\psi'(a)$ explicitly in terms of the hypergeometric function $F$, we have from \eqref{hyper1} and (\ref{2Dconvexity}):
\begin{align*}
 &\vartheta_{k,2}(t)=\Gamma_k\psi'(a)^{-k} \int_0^1 u^{-k/2-1}(1-u)^{k/2}\left(\psi'(a)^2-\psi'(b)^2 u\right)^{k/2}\,du\\
 &\leq \Gamma_k  \int_0^1 u^{-k/2-1}(1-u)^{k/2+1}\left(1-\frac{b^2}{a^2}u\right)^{k/2-1}\,du\\
 &\quad+\Gamma_k \psi'(a)^{-k} \int_0^1 u^{-k/2}(1-u)^{k/2}(a^2-b^2)^{1-k/2}a^{k-2}\left(1-\frac{b^2}{a^2}u\right)^{k/2-1}\left(\psi'(a)^2-\psi'(b)^2 \right)^{k/2}\,du\\
 &=\Gamma_k  \int_0^1 u^{-k/2-1}(1-u)^{k/2}\left(1-\frac{b^2}{a^2}u\right)^{k/2-1}\,du\\
 &\quad-\Gamma_k  \int_0^1 u^{-k/2}(1-u)^{k/2}\left(1-\frac{b^2}{a^2}u\right)^{k/2-1}\,du\\
 &\quad+(a^2-b^2)^{1-k/2}a^{k-2}\left(\Gamma_k \int_0^1 u^{-k/2}(1-u)^{k/2}\left(1-\frac{b^2}{a^2}u\right)^{k/2-1}\,du\right) \left(1-t^2 \right)^{k/2}.
\end{align*}
Note that 
\begin{align*}
\vartheta_{k,2}\left(\frac{b}{a}\right)&=\Gamma_k  \int_0^1 u^{-k/2-1}(1-u)^{k/2}\left(1-\frac{b^2}{a^2}u\right)^{k/2}\,du\,,\\
\vartheta_{k,2}'\left(\frac{b}{a}\right)&=- \frac{bk}{a} \left(\Gamma_k  \int_0^1 u^{-k/2}(1-u)^{k/2}\left(1-\frac{b^2}{a^2}u\right)^{k/2-1}\,du\right)\,,
\end{align*}
and hence
\begin{align*}
\vartheta_{k,2}\left(\frac{b}{a}\right)-\frac{b}{ak} \vartheta_{k,2}'\left(\frac{b}{a}\right)&=\Gamma_k  \int_0^1 u^{-k/2-1}(1-u)^{k/2}\left(1-\frac{b^2}{a^2}u\right)^{k/2-1}\,du\,.
\end{align*}
Substituting these expressions and recalling $c=b/a$, we obtain
\begin{align*}
 &\vartheta_{k,2}(t)\\
&\le
 \vartheta_{k,2}\left(\frac{b}{a}\right)-\frac{b}{ak} \vartheta_{k,2}'\left(\frac{b}{a}\right)
+\frac{a}{bk} \vartheta_{k,2}'\left(\frac{b}{a}\right)
 - \frac{a}{bk}(a^2-b^2)^{1-k/2}a^{k-2}\vartheta_{k,2}'\left(\frac{b}{a}\right) \left(1-t^2 \right)^{k/2}\\
&= 
\vartheta_{k,2}\left(c\right)
+\frac{1}{kc}(1-c^2) \vartheta_{k,2}'\left(c\right)
 - \frac{1}{kc}(1-c^2)^{1-k/2}\vartheta_{k,2}'\left(c\right) \left(1-t^2 \right)^{k/2}\,,
\end{align*}
which concludes the proof.
\end{proof}


\subsection{Higher-Dimensional Relative Convexity}

In higher dimension $N \geq 3$ the proof of Lemma~\ref{lem:comparison from below} becomes more involved. As we are not aware of any suitable inequality involving hypergeometric functions, we argue directly from the representation using series \eqref{hyposeries}.
Further, we will make use of relative convexity properties defined as follows:

\begin{definition}[Relative convexity]\label{def:relconvexity}
Let $g$ and $\varphi$ be $C^2$ functions defined on some interval $I\subset \RR$. We say that $g$ is \emph{convex relatively to} $\varphi$ if and only if the following convexity-like inequality holds true:
\begin{equation*}
\forall (t,c)\in I^2\quad  g(t) \geq \alpha + \beta \varphi(t)\, ,
\end{equation*}
where $\alpha$ and $\beta$ are chosen in order to fulfill zeroth and first-order approximation at $t = c$:
\begin{align*}
& \alpha = g(c) -  \dfrac{g'(c)}{\varphi'(c)} \varphi(c)\, ,\\
& \beta = \dfrac{g'(c)}{\varphi'(c)}\, .
\end{align*}
In other words, the function $g\circ\varphi^{-1}$ is convex.
\end{definition}

A straightforward computation shows that $g\circ\varphi^{-1}$ is convex if and only if the following criterion is valid:
\begin{equation*}
\forall t\in (0,1)\quad g''(t)\geq \dfrac{\varphi''(t)}{\varphi'(t)} g'(t)\, .
\end{equation*}

\begin{proof}[Proof of Lemma \ref{lem:comparison from below} in dimension $N\geq 3$] By Definition \ref{def:relconvexity}, Lemma \ref{lem:comparison from below} states that the function $\vartheta(t)/k$ with $\vartheta(\cdot)=\vartheta_{k,N}(\cdot)$ as defined in \eqref{eq:def function H} is convex relatively to $(1 - t^N)^{k/N}$. 
Or alternatively, the function $g(z)$ defined by
\[g(z) :=  \frac{d_N}{k}  F\left(\bar a,\bar b; \bar c; z  \right)  = \frac{d_N}{k} \sum_{n = 0}^{\infty} \dfrac{(\bar a)_n(\bar b)_n}{(\bar c)_n}\dfrac{z^n}{n!}  \]
with
\[\bar a:= -\dfrac k2\, , \quad \bar b := 1-\frac{k+N}{2}\, , \quad
 \bar c := \frac{N}{2}
\]
is convex relatively to $(1 - z^{N/2})^{k/N}$. This statement is equivalent to the following inequality:
\begin{equation} \label{eq:appendix 1}
z g''(z) \geq \left( \dfrac k2 - 1 + \dfrac{N-k}{2}\dfrac1{1 - z^{N/2}} \right) g'(z)\, .
\end{equation}
Note that here $\bar b >0$ since $k<2-N$, and so all parameters $\bar a, \bar b, \bar c$ are strictly positive. 
We now use the two following properties:
\begin{enumerate}[(i)]
\item the function $g$ is strictly decreasing when $k\in (-N,2-N)$, \label{item:i}
\item we have the following sharp inequality for $t\in (0,1)$: 
\label{item:ii}
\begin{equation*}
\dfrac{N}{1 - t^N} \geq \dfrac{2}{1 - t^2} + \dfrac{N-2}{2}\, .
\end{equation*}
\end{enumerate}  
The first item \eqref{item:i} is obtained from identity \eqref{hyperdiff}:
\begin{equation*}
g'(z) = \frac{d_N}{k} \dfrac{\bar a\bar b }{\bar c} F(\bar a+1,\bar b +1,\bar c+1,z) =  d_N\left(\dfrac{k+N - 2}{2N}\right) F(\bar a+1,\bar b +1,\bar c+1,z)\, .
\end{equation*}
To obtain the second item \eqref{item:ii}, we need to show that $u(t)\geq 0$ for all $t\in (0,1)$, where we define
$$
u(t):=N(1-t^2)-2(1-t^N)-\left(\tfrac{N-2}{2}\right) (1-t^2)(1-t^N)\,.
$$
Note that $u(0)=(N-2)/2$ and $u(1)=0$. It is therefore enough to show that $u'(t)\leq 0$ on $(0,1)$. 
Differentiating, we have
\begin{align*}
   u'(t)&=-2Nt+2Nt^{N-1}+\left(\tfrac{N-2}{2}\right) \left[2t(1-t^N)+Nt^{N-1}(1-t^2)\right] \\
   &=-(N+2)\left(t-\tfrac{N}{2}t^{N-1}+\left(\tfrac{N-2}{2}\right)t^{N+1}\right)\,.
\end{align*}
By Young's inequality,
\begin{equation*}
t^{N-1} \leq \tfrac2 N t^{\theta N/2} + \left(\tfrac{N-2}{N}\right) t^{\nu N/(N-2)}\, , \quad \theta = \dfrac 2N\, , \quad \nu = \dfrac{(N-2)(N+1)}{N}\, .
\end{equation*} 
and so $u'(t)\leq 0$ follows. This concludes the prove of \eqref{item:ii}.\\
Hence, in order to show inequality \eqref{eq:appendix 1} it is enough to prove
\begin{align*} 
& z g''(z) \geq \left( \dfrac k2 - 1 + \dfrac{N-k}{N}\dfrac1{1 - z}  + \dfrac{(N-k)(N-2)}{4N}\right) g'(z)
\end{align*}
thanks to identity (ii).
This is equivalent to the inequality
\begin{align*}
& (1- z)z g''(z) \geq \left( \dfrac{(N+k)(N-2)}{4N} + \dfrac{4N - 2Nk - N^2 + (k+2)N-2k}{4N}z  \right) g'(z)\, .
\end{align*}
Dividing by $z$ and using that $z\leq 1$, it is enough to prove that
\begin{align*}  
& (1- z) g''(z) \geq \left( \dfrac{(N+k)(N-2)}{4N} + \dfrac{4N - 2Nk - N^2 + (k+2)N-2k}{4N}  \right) g'(z)\, ,
\end{align*}
which simplifies to
\begin{align}\label{lem6equiv}
& (1- z) g''(z) \geq \left( 1 - \dfrac k N  \right) g'(z)\, .
\end{align}
We conclude that \eqref{lem6equiv} directly implies Lemma \ref{lem:comparison from below}.
We now examine inequality \eqref{lem6equiv} term by term using the representation by series. To establish a relation between $g'$ and $g''$, we make use of the following identities for the Pochhammer symbol:
\begin{align*}
 (q+1)_n = \frac{q+n}{q}(q)_n\, , \qquad
 (q)_{n+1}=(q+n) (q)_n\, .
\end{align*}
We can then write the left hand side of \eqref{lem6equiv} as
\begin{align*}
 &(1-z)g''(z)\\
 &= (1-z)d_N\left(\dfrac{k+N - 2}{2N}\right)\dfrac{(\bar a+1)(\bar b+1)}{(\bar c+1)}\sum_{n = 0}^{\infty} \dfrac{(\bar a+2)_n(\bar b+2)_n}{(\bar c+2)_n}\dfrac{z^n}{n!}  \\
  &= d_N\left(\dfrac{k+N - 2}{2N}\right)\dfrac{(\bar a+1)(\bar b+1)}{(\bar c+1)}\sum_{n = 0}^{\infty} \dfrac{(\bar a+2)_n(\bar b+2)_n}{(\bar c+2)_n}\dfrac{\left(z^n-z^{n+1}\right)}{n!}  \\
 &= d_N\left(\dfrac{k+N - 2}{2N}\right)\dfrac{(\bar a+1)(\bar b+1)}{(\bar c+1)}\sum_{n = 0}^{\infty}
 \left( \dfrac{(\bar a+1+n)(\bar b +1+n) - n(\bar c+1+n)}{(\bar a+1+n)(\bar b +1+n)}\right) 
 \dfrac{(\bar a+2)_n(\bar b+2)_n}{(\bar c+2)_n}\dfrac{z^n}{n!}  \\
&= d_N\left(\dfrac{k+N - 2}{2N}\right)\sum_{n = 0}^{\infty}
\left( \dfrac{(\bar a+1+n)(\bar b +1+n)}{(\bar c+1+n)} - n
\right) 
\left(\dfrac{(\bar a+1)_n(\bar b+1)_n}{(\bar c+1)_n}\right)\dfrac{z^n}{n!} \, .
\end{align*}
Comparing this expression term by term to the right hand side of \eqref{lem6equiv},
\begin{align*}
  \left( 1 - \dfrac k N  \right) g'(z)
  &=  \left( 1 - \dfrac k N  \right)d_N\left(\dfrac{k+N - 2}{2N}\right)
  \sum_{n = 0}^{\infty} \left(\dfrac{(\bar a+1)_n(\bar b+1)_n}{(\bar c+1)_n}\right)\dfrac{z^n}{n!} \,,
\end{align*}
we need to show that for all $n\geq 0$,
\begin{equation*}
(\bar a+1+n)(\bar b +1+n) \leq \left(1 - \dfrac k N +n\right) (\bar c+1+n)
\end{equation*}
(note that the sign has changed due to division by $k+N-2<0$). Expanding with respect to $n$, this is equivalent to
$$
\left(\bar a + \bar b -\bar c +\frac{k}{N}\right)n + (\bar a +1)(\bar b +1) - (\bar c+1)\left(1-\frac{k}{N}\right)
\leq 0\,.
$$
We claim that the latter holds true, since we have both
\begin{equation*}
\bar a + \bar b -\bar c +\frac{k}{N}= -k + 1 - N + \dfrac kN = \dfrac{(N+k)(1 - N)}{N}<0\, ,
\end{equation*}
and
\begin{align*}
 (\bar a +1)(\bar b +1) - (\bar c+1)\left(1-\frac{k}{N}\right)
  &= \dfrac{k(N+k)}4 + 1 - k - N + \dfrac kN \\
 & = (N+k)\left( \dfrac k4 + \dfrac 1N - 1 \right) <0\, .
\end{align*}
Finally, the fact that $\beta(c)\leq 0$ follows directly from $\vt'(c)\geq 0$. The sign of $\alpha(c)$ is a consequence of the convexity inequality that we showed above. More precisely, as remarked above, inequality \eqref{eq:comparison from below} is equivalent to 
$$
g''(c)\geq \dfrac{\varphi''(c)}{\varphi'(c)} g'(c) \qquad \forall c\in (0,1)\,.
$$
Therefore, differentiating $\alpha = g(c) -  \dfrac{g'(c)}{\varphi'(c)} \varphi(c)$ with respect to $c$, we have
\begin{align*}
    \alpha'(c)
   = \frac{\varphi(c)}{\varphi'(c)^2} \left(-g''(c)\varphi'(c)+g'(c)\varphi''(c)\right) \leq 0\,.
\end{align*}
Together with
$$
\alpha(c)\to \frac{\vt(0)}{k}
=\frac{2^{N-2}\sigma_{N-1}}{k}\frac{\Gamma\left(\frac{N-1}{2}\right)^2}{\Gamma(N-1)}
<0 \quad \text{ as } \quad c\to 0^+\,,
$$
we conclude that $\alpha(c)\le 0$ for all $c\in (0,1)$.
This completes the proof of Lemma \ref{lem:comparison from below}.
\end{proof}


\section*{Acknowledgements}
This project has received funding from the European Research Council (ERC) under the European Union's Horizon 2020 research and innovation programme (grant agreements No 639638 and No. 883363). JAC was partially supported by the EPSRC through grant number EP/P031587/1. FH was partially supported by Caltech's von Karman postdoctoral instructorship. FH is grateful to Camille, Marine and Constance Bichet, and Joachim Schmitz-Justin and Rita Zimmermann for their unbelievable hospitality during the SARS-CoV-2 outbreak.


\medskip

\bibliographystyle{abbrv}
\bibliography{uniquenessbooks}
\end{document}